\documentclass[12pt,a4paper]{amsart}
\usepackage{amsfonts}
\usepackage{amsthm}
\usepackage{amsmath, amscd, amssymb, mathtools, color}
\usepackage[latin2]{inputenc}
\usepackage{t1enc}
\usepackage[mathscr]{eucal}
\usepackage{indentfirst}
\usepackage{graphicx}
\usepackage{graphics, tikz}
\usepackage{hyperref}
\usepackage{pict2e}
\usepackage{epic}
\numberwithin{equation}{section}
\usepackage[margin=2.9cm]{geometry}
\usepackage{epstopdf}

\DeclareMathOperator{\reg}{reg}
\DeclareMathOperator{\lex}{lex}

%
%
\def\NZQ{\Bbb}               

\def\ZZ{{\NZQ Z}}

%
%
\def\frk{\frak}               

\def\Phi{{\frk n}}
\def\Phi{{\frk N}}
%

\def\MR{{\mathcal R}}

\def\root {{>_\MR}}
%

%
\def\opn#1#2{\def#1{\operatorname{#2}}} 

%
\opn\div{div} \opn\Div{Div} \opn\cl{cl} \opn\Cl{Cl}
%

%
%

%
%
%
 \theoremstyle{plain}
\newtheorem{Theorem}{Theorem}[section]
 \newtheorem{Lemma}[Theorem]{Lemma}
 \newtheorem{Corollary}[Theorem]{Corollary}
 \newtheorem{Proposition}[Theorem]{Proposition}
 
 \newtheorem{conjecture}[Theorem]{Conjecture}

 \theoremstyle{remark}

 \newtheorem{Definition}[Theorem]{Definition}
 \newtheorem{Remark}[Theorem]{Remark}
 
 \newtheorem{Example}[Theorem]{Example}
 
 \newtheorem{Notation}[Theorem]{Notation}
 \newtheorem{question}[Theorem]{Question}

%
\let\epsilon\varepsilon
\let\kappa=\varkappa
%
%
\textwidth=15cm \textheight=22cm \topmargin=0.5cm
\oddsidemargin=0.5cm \evensidemargin=0.5cm \pagestyle{plain}
%



\begin{document}

\title{Rooted order on minimal generators of powers of some cover ideals}

\author[N. Erey]{Nursel Erey}

\address{Gebze Technical University \\ Department of Mathematics \\
Gebze \\ Kocaeli \\ 41400 \\ Turkey} 

\email{nurselerey@gtu.edu.tr}

 \subjclass[2020]{Primary: 05E40  Secondary: 05C38 }

 \keywords{chordal graph, cover ideal, linear quotients, minimal generators, path, powers of ideals}

\begin{abstract} 
 We define a total order, which we call rooted order, on minimal generating set of $J(P_n)^s$ where $J(P_n)$ is the cover ideal of a path graph on $n$ vertices. We show that each power of a cover ideal of a path has linear quotients with respect to the rooted order. Along the way, we characterize minimal generating set of $J(P_n)^s$ for $s\geq 3$ in terms of minimal generating set of $J(P_n)^2$. We also discuss the extension of the concept of rooted order to chordal graphs. Computational examples suggest that such order gives linear quotients for powers of cover ideals of chordal graphs as well.
\end{abstract}

\maketitle

\section{Introduction}
Let $S=\Bbbk[x_1,\ldots ,x_n]$ be the polynomial ring over a field $\Bbbk$ and let $G$ be a finite simple graph with vertex set $V(G)=\{x_1,\dots ,x_n\}$ and edge set $E(G)$. The \emph{cover ideal} of $G$ is a squarefree monomial ideal of $S$ defined by 
$$\displaystyle J(G)=\bigcap_{\{x_i,x_j\}\in E(G)} (x_i, x_j).$$
The cover ideal $J(G)$ is the Alexander dual of the well-known edge ideal of $G$. Cover ideals and their powers were studied in many articles, see for example \cite{constan, erey, second power,  fakhari reg,fakhari symbolic,francisco van tuyl,hang,herzog hibi ohsugi,kumar, mohammadi powers of chordal,mohammadi2} Herzog, Hibi and Ohsugi \cite{herzog hibi ohsugi} showed that if $G$ is a Cohen-Macaulay chordal graph, then all powers of the cover ideal of $G$ have linear resolutions. Moreover, they proposed the following conjecture:

\begin{conjecture}\label{conjecture}\cite[Conjecture~2.5]{herzog hibi ohsugi} All powers of the vertex cover ideal of a chordal graph are componentwise linear.
\end{conjecture}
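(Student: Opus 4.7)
The plan is to strengthen the conjecture to the statement that each power $J(G)^s$ of the cover ideal of a chordal graph $G$ has linear quotients with respect to an appropriately defined rooted order, following the strategy developed in this paper for paths. Since linear quotients on each graded strand imply componentwise linearity, it suffices to (i) define a rooted order on the minimal generators of $J(G)^s$ for arbitrary chordal $G$, and (ii) verify the linear quotients property with respect to that order on each graded component.

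First, I would formulate the rooted order for chordal graphs. Given a perfect elimination ordering of the vertices of $G$, the tree of maximal cliques can be rooted, and each non-simplicial vertex can be assigned a canonical ``parent'' vertex. A minimal vertex cover of $G$ is then determined by a compatible choice of vertices across the rooted clique tree, which generalizes the linear ``left-to-right'' picture available for $P_n$. Next, I would prove a structural characterization of the minimal generators of $J(G)^s$ in terms of those of $J(G)^2$, in the spirit of the characterization for paths mentioned in the abstract. The point is to exhibit a canonical factorization of each generator compatible with the rooted clique tree, so that comparing two generators in the rooted order reduces to a local comparison inside a single clique.

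The main obstacle is the linear quotients step. Given two generators $u<v$ in the rooted order lying in the same graded component, I must exhibit a witness $w<v$ with $(w:v)=(x_k)$ for some variable $x_k$ dividing $u$. For paths the exchange of two vertices is forced by the linear structure; for a general chordal graph, the branching of the clique tree means several simplicial vertices compete, and one must decide in which leaf clique to perform the swap so that the resulting generator is both a minimal generator of $J(G)^s$ and strictly earlier in the rooted order. I anticipate using a double induction: outer induction on the number of maximal cliques of $G$ (peeling off a simplicial vertex along the perfect elimination ordering and invoking the inductive hypothesis for the smaller chordal graph) and inner induction on the exponent $s$ (reducing a factor of $J(G)$ and appealing to the result for $J(G)^{s-1}$). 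The delicate point is that the reduction must be compatible with the rooted order, which requires some care in the choice of which simplicial vertex to remove at each stage.

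If the direct linear quotients route encounters obstructions in the branching step, I would fall back on auxiliary strategies: polarization followed by squarefree Alexander duality arguments, splitting techniques for cover ideals of chordal graphs along a simplicial vertex, or a comparison with symbolic powers. Nevertheless, given the paper's computational evidence and the conceptually clean form of the rooted order, I would pursue the linear quotients approach as the primary line of attack.
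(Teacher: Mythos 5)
The statement you are addressing is a conjecture (Herzog--Hibi--Ohsugi), and the paper does not prove it: it proves only the special case of path graphs (Theorem~\ref{thm:main thm}) and, in Section~\ref{sec: rooted order on chordal graphs}, defines rooted lists for chordal graphs and poses the linear-quotients question for their powers as an open problem. Your proposal is essentially a restatement of that same program --- define a rooted order via a simplicial vertex / perfect elimination ordering, reduce $G(J(G)^s)$ to $G(J(G)^2)$, and verify linear quotients by induction --- but it is a plan, not a proof. You yourself flag the decisive step (``the main obstacle is the linear quotients step'') and then do not carry it out; choosing the leaf clique in which to perform the exchange so that the witness is simultaneously a minimal generator and earlier in the rooted order is precisely the point at which the path argument uses the linear structure of $P_n$ (every generator contains exactly one of $x_{n-1},x_n$, Remark~\ref{rk:divisor of the same type}, and the two-branch recursion of Figure~\ref{fig:P_n}), and no substitute for this is supplied when the clique tree branches.

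Two further concrete gaps. First, the reduction of $G(J(G)^s)$ to $G(J(G)^2)$ (the analogue of Theorem~\ref{thm:reduction} and Corollary~\ref{cor: characterization}) is proved in the paper by a delicate case analysis on the four sublists $\mathcal{A},\mathcal{B},\mathcal{C},\mathcal{D}$ of $\MR(P_n)$, together with exponent-counting on the specific variables $x_{n-2},x_{n-3},x_{n-4}$; you assert the chordal analogue without argument, and it is not known to hold. Second, the paper's only chordal tool in this direction, Lemma~\ref{lem: smart claim for chordal}, carries the hypothesis $F(J(G)^s)=G(J(G)^s)$, which already fails for $P_5$ (Lemma~\ref{lem: s-fold products of paths 5 vertices}); so even the partial machinery available does not transfer. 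Your fallback strategies (polarization, Alexander duality, symbolic powers) are the routes taken in \cite{kumar} and \cite{fakhari symbolic}, which as the introduction notes do not extend to non-bipartite chordal graphs because ordinary and symbolic powers then differ. In short, the conjecture remains open and your proposal does not close it.
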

Francisco and Van Tuyl \cite{francisco van tuyl} showed that cover ideals of chordal graphs are componentwise linear. For a graded ideal $I\subset S$, being componentwise linear is an algebraic property which requires that for all $j$, the ideal $I_{\langle j \rangle}$, generated by all homogeneous polynomials of degree $j$ belonging to $I$, has a linear resolution. Later, it was proved that chordal graphs in fact have stronger combinatorial properties such as being shellable \cite{vv} and vertex decomposable \cite{woodroofe}. In \cite{mohammadi powers of chordal} it was proved that powers of cover ideals of Cohen-Macaulay chordal graphs have linear quotients. A graph $G$ is called Cohen-Macaulay if the quotient ring $S/I(G)$ is Cohen-Macaulay, where $I(G)$ denotes the edge ideal of $G$. It is well-known \cite[Lemma 9.1.10]{herzog hibi monomial ideals} that the cover ideal of a Cohen-Macaulay graph is generated in single degree. Since cover ideal of a path graph can have minimal generators of different degrees, paths are not necessarily Cohen-Macaulay. In fact, using the recursive description of the minimal generating set of $J(P_n)$ in Lemma~\ref{lem: rooted list gives linear quotients}, one can show that $P_n$ is not Cohen-Macaulay for $n\geq 5$.

In a recent preprint \cite{kumar} Kumar and Kumar proved Conjecture~\ref{conjecture} for all trees. Their main tool is a result from \cite{fakhari symbolic} which says that for any graph $G$ the polarization of $k^{th}$ symbolic power of $J(G)$ is the cover ideal of some graph denoted by $G_k$. Since symbolic powers and ordinary powers of cover ideal of bipartite graphs coincide \cite{gitler}, their approach is to show that $G_k$ is vertex decomposable when $G$ is a tree. Although trees contain the class of path graphs, the methods in \cite{kumar} cannot be applied to non-bipartite chordal graphs.

The main goal of this paper is to make a contribution to the problem in Conjecture~\ref{conjecture} and bring up an idea that is applicable to all chordal graphs. We introduce the notion of rooted order (Definition~\ref{def:rooted for high powers} and Definition~\ref{def:rooted for powers of chordal}) and we show that all powers of the cover ideal of a path graph have linear quotients with respect to such order (Theorem~\ref{thm:main thm}). Our results build on and extend the analogous results presented in \cite{second power} from second powers to all powers. We analyze the minimal generating set of $J(P_n)^s$ in relation to rooted order. An interesting byproduct we obtain in the process is Corollary~\ref{cor: characterization} which characterizes the minimal generators of $J(P_n)^s$ for $s\geq 3$ in terms of those of the second power. Although we focus on the class of path graphs, the notion of rooted order naturally generalizes to chordal graphs. In fact, examples we tested on chordal graphs led us to question if one can always find a rooted order which gives linear quotients for powers of their cover ideals. We discuss this in Section~\ref{sec: rooted order on chordal graphs} and we think that the techniques developed in this article may be helpful to further explore the problem at hand in a more general framework.
 
\section{Preliminaries}
Let $S=\Bbbk[x_1,\ldots ,x_n]$ be the polynomial ring over a field $\Bbbk$ and let $I$ be a monomial ideal. We denote the set of minimal generators of $I$ by $G(I)$. We say $I$ has \emph{linear quotients} if there exists an order $u_1,\dots ,u_k$ on the elements of $G(I)$ such that for every $i=2,\dots ,k$ the colon ideal $(u_1,\ldots ,u_{i-1}):(u_i)$ is generated by some variables. To simplify our notation, for any pair of monomials $u$ and $v$ we will write
\[\displaystyle u:v = \frac{u}{\gcd(u,v)}.\]
If $M$ is a subset of $S$ and $u$ is a monomial, then we define a new subset $uM$ by
\[uM = \{um : m\in M\}.\]
Similarly, if $L=v_1,\ldots ,v_t$ is a list (or sequence) of monomials, then $uL$ denotes a new list obtained from $L$ by multiplying each term by $u$. In other words, 
\[uL=uv_1,\ldots ,uv_t.\]
To keep our notation simple and also to distinguish lists from ideals we will not put parentheses around lists.

Let $G$ be a finite simple graph with vertex set $V(G)=\{x_1,\dots ,x_n\}$ and edge set $E(G)$. A set $C$ of vertices of $G$ is called a \emph{vertex cover} if $e\cap C\neq \emptyset$ for every edge $e\in E(G)$. A vertex cover $C$ is called \emph{minimal} if no proper subset of $C$ forms a vertex cover for $G$. The \emph{cover ideal} of $G$ is denoted by $J(G)$ and it is defined by \[J(G)=\bigcap_{\{x_i,x_j\}\in E(G)} (x_i, x_j).\] The set of minimal generators of $J(G)$ is given by
\[G(J(G))=\{x_{i_1}\dots x_{i_k} : \{x_{i_1},\dots, x_{i_k}\} \text{ is a minimal vertex cover of } G\}.\]
If the graph $G$ has no edges, then $J(G)=(1)$.

If $A$ is a subset of vertices of $G$, then $G\setminus A$ denotes the graph which is obtained from $G$ by removing the vertices in $A$. We call a graph \emph{chordal} if it has no induced cycle of length greater than $3$. We say $x_i$ is a \emph{neighbor} of $x_j$ if $\{x_i,x_j\}\in E(G)$. The set of all neighbors of $x_i$ is denoted by $N(x_i)$. The \emph{closed neighborhood} of $x_i$ is denoted by $N[x_i]$ and it is equal to the union $N(x_i)\cup \{x_i\}$. Every chordal graph has a vertex whose closed neighbourhood induces a complete graph and such vertex is called a \emph{simplicial vertex}.

A \emph{path} on vertices $x_1,\dots ,x_n$ is denoted by $P_n$. Throughout the paper we will assume that edges of $P_n$ are labelled as
\[E(G)=\{\{x_1,x_2\},\{x_2,x_3\},\dots ,\{x_{n-1}, x_n\}\}.\]
 
\begin{Definition}[\textbf{Rooted list}]\cite[Definition~2.2 ]{second power}\label{def:rooted list} The {\em rooted list} of $P_n$, denoted by  $\MR(P_n)$, is recursively defined by the following formulas:
	\begin{itemize}
		\item $\MR(P_1)=1$ 
		\item  $\MR(P_2)=x_1, x_2$
		
		\item  $\MR(P_3)=x_2,x_1x_3$
		
		
		\item for $n\geq 4$, if $\MR(P_{n-2})=u_1,\dots , u_r$ and $\MR(P_{n-3})=v_1,\dots , v_s$ then
		\[\MR(P_n)=x_{n-1}u_1,\dots ,x_{n-1}u_r, x_nx_{n-2}v_1,\dots ,x_nx_{n-2}v_s.\]	
	\end{itemize}
	\end{Definition}
The motivation for this definition is the next lemma.

\begin{Lemma}\label{lem: rooted list gives linear quotients}
	Let $\MR(P_n)=u_1,\dots ,u_q$. Then
	\begin{enumerate}
		\item $G(J(P_n))=\{u_1,\dots ,u_q\}$
		\item $J(P_n)$ has linear quotients with respect to $u_1,\dots ,u_q$.
	\end{enumerate}
\end{Lemma}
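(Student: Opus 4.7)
The plan is to prove (1) and (2) simultaneously by induction on $n$. The base cases $n=1,2,3$ are direct: one computes $J(P_n)$ from the intersection definition, compares with $\MR(P_n)$, and observes that the single colon arising in (2) is generated by a single variable. For the inductive step $n \geq 4$, set
\[\MR(P_n) = w_1, \dots, w_q = x_{n-1}u_1, \dots, x_{n-1}u_r, \, x_n x_{n-2} v_1, \dots, x_n x_{n-2} v_s,\]
with $u_i \in \MR(P_{n-2})$ and $v_j \in \MR(P_{n-3})$. A direct inspection of the recursion shows that each $u_i$ is a squarefree monomial in $x_1,\dots, x_{n-2}$ and each $v_j$ is a squarefree monomial in $x_1,\dots, x_{n-3}$; this will be used repeatedly.

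For (1), I would show that every minimal vertex cover $C$ of $P_n$ contains exactly one of $x_{n-1}, x_n$: they cannot both lie in $C$, for $x_n$ meets only the edge $\{x_{n-1},x_n\}$ and would then be removable. If $x_{n-1} \in C$ and $x_n \notin C$, then $C \setminus \{x_{n-1}\}$ is a minimal vertex cover of $P_{n-2}$, which by the induction on (1) yields exactly the first half of $\MR(P_n)$. If $x_n \in C$ and $x_{n-1} \notin C$, the edge $\{x_{n-2},x_{n-1}\}$ forces $x_{n-2} \in C$, and $C \setminus \{x_n, x_{n-2}\}$ is a minimal cover of $P_{n-3}$, giving the second half.

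For (2), I would analyze the colon ideal $(w_1,\dots,w_{i-1}) : w_i$ in three cases. If $2 \le i \le r$, the common factor $x_{n-1}$ cancels and the colon becomes $(u_1, \dots, u_{i-1}) : u_i$, generated by variables by the induction on $P_{n-2}$. If $i = r+l$ with $l \geq 2$, the predecessors inside the second block contribute $(v_1,\dots, v_{l-1}) : v_l$, generated by variables by the induction on $P_{n-3}$. Whenever $i \geq r+1$, the predecessors from the first block contribute terms $x_{n-1}u_j : x_n x_{n-2} v_l$, each of which is divisible by $x_{n-1}$ since $x_{n-1}$ does not appear in $x_n x_{n-2} v_l$.

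The main obstacle, and heart of the argument, is showing that $x_{n-1}$ itself belongs to the colon ideal; once this is established, the first-block contributions lie in $(x_{n-1})$ and the whole colon is generated by variables. For this I would observe that $x_{n-2} v_l$ is a vertex cover of $P_{n-2}$: indeed $v_l$ covers the edges $\{x_1,x_2\}, \dots, \{x_{n-4}, x_{n-3}\}$ of $P_{n-3}$ by the induction on (1), while $x_{n-2}$ covers the last edge $\{x_{n-3}, x_{n-2}\}$. Hence some minimal vertex cover of $P_{n-2}$ lies inside the support of $x_{n-2} v_l$, so by the induction on (1) there exists $j$ with $u_j$ dividing $x_{n-2}v_l$. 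Then $x_{n-1}u_j$ divides $x_{n-1} \cdot x_n x_{n-2} v_l$, so $x_{n-1} \in (w_1,\dots,w_{i-1}) : w_i$, completing the inductive step.
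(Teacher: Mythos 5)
Your proof is correct, but it is worth noting that the paper itself does not argue this lemma at all: it simply cites \cite[Lemma~2.1]{second power} together with the recursive definition of the rooted list. What you have written is essentially a self-contained reconstruction of that external lemma. Your part (1) (every minimal vertex cover of $P_n$ contains exactly one of $x_{n-1},x_n$, and deleting $x_{n-1}$, resp.\ $\{x_n,x_{n-2}\}$, sets up a bijection with minimal covers of $P_{n-2}$, resp.\ $P_{n-3}$) is exactly the combinatorial content behind the recursion in Definition~\ref{def:rooted list}. In part (2), your identification of the one nontrivial point --- that $x_{n-1}$ lies in the colon ideal when $w_i$ comes from the second block, proved by finding $u_j\in\MR(P_{n-2})$ dividing $x_{n-2}v_l$ --- is precisely the statement the paper records separately as Lemma~\ref{lem:every b contains some a} (i.e.\ \cite[Lemma~3.5]{second power}) and uses again via \cite[Lemma~3.6]{second power} in Proposition~\ref{prop:smart claim generalized}. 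So your argument is not only valid but aligns with the machinery the paper relies on later; the only difference is that you prove from scratch what the paper imports by reference. The minor verifications you leave implicit (that $C\setminus\{x_{n-1}\}$ is \emph{minimal} for $P_{n-2}$, and the converse inclusions giving that the two halves of the list exhaust $G(J(P_n))$) are routine and go through as you indicate.
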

\begin{proof}
	Follows from \cite[Lemma~2.1]{second power} and the recursive definition of rooted list.
\end{proof}

Based on the lemma above, a total order on minimal generators of $J(P_n)$ was defined.

\begin{Definition}[\textbf{Rooted order}]\cite[Definition~2.2 ]{second power}\label{def:rooted order}
	Let $\MR(P_n)=u_1,\dots ,u_q$. The \emph{rooted order}, denoted by $>_\MR$, is a total order on $G(J(P_n))$ such that $u_i>_\MR u_j$ when $i<j$.
\end{Definition}

\begin{Definition}
	Let $\mathbf{u}=(u_1, \ldots ,u_n)$, $\mathbf{v}=(v_1, \ldots, v_n)$ be two elements in $\ZZ^n$. Then we write $\mathbf{u}>_{\lex} \mathbf{v}$ if the first non-zero entry in $\mathbf{u}-\mathbf{v}$ is positive. 
\end{Definition}

The following is a general version of Definition~2.4 in \cite{second power}.
\begin{Definition}[\textbf{$\mathbf{s}$-fold product, maximal expression}]
	Let $I=(u_1,\dots ,u_q)$.	We say that $M=u_1^{a_1}\dots u_q^{a_q}$ is an {\em $s$-fold product} of minimal generators of $I$ if each $a_i$ is a non-negative integer and $a_1+\dots +a_q=s$. We write $u_1^{a_1}\dots u_q^{a_q}>_{lex} u_1^{b_1}\dots u_q^{b_q}$ if $(a_1,\dots ,a_q)>_{lex} (b_1,\dots ,b_q)$.
	We say that $M=u_1^{a_1}\dots u_q^{a_q}$ is the {\em maximal expression} if $(a_1,\dots ,a_q)>_{lex} (b_1,\dots ,b_q)$ for any other $s$-fold product $M=u_1^{b_1}\dots u_q^{b_q}$. 
\end{Definition}

\begin{Notation}
	If $G(I)=\{u_1,\dots , u_q\}$, then the set of all $s$-fold products is denoted by $F(I^s)=\{u_{i_1}\dots u_{i_s} :  u_{i_1},\dots, u_{i_s}\in G(I)\}$.
\end{Notation}

We also generalize Definition~2.6 in \cite{second power} from second powers to all powers.
\begin{Definition}[\textbf{Rooted order/list on powers}]\label{def:rooted for high powers} Let $\MR (P_n)=u_1,\dots ,u_q$. We define a total order $>_{\MR}$ on $F(J(P_n)^s)$ which we call \emph{rooted order} as follows. For $M,N\in F(J(P_n)^s)$ with maximal expressions $M=u_1^{a_1}\dots u_q^{a_q}$ and $N=u_1^{b_1}\dots u_q^{b_q}$ we set $M>_{\MR} N$ if $(a_1,\dots ,a_q)>_{lex} (b_1,\dots ,b_q)$. 
	
	Let $G(J(P_n)^s)=\{U_1, \ldots, U_r\}$. Then we say $U_1,\dots ,U_r$ is a \emph{rooted list} of minimal generators of $J(P_n)^s$ if $U_1>_{\MR} \ldots >_{\MR} U_r$. In such case, we denote the rooted list of generators by $\MR(J(P_n)^s)=U_1, \ldots, U_r.$ 
\end{Definition}

\begin{Remark}
	If $n=1$, then $\MR(J(P_n)^s)=1$ for every $s$.
\end{Remark}

\begin{figure}[hbt!]
	\centering
	\begin{tikzpicture}
	[scale=1.00, vertices/.style={draw, fill=black, circle, minimum size = 4pt, inner sep=0.5pt}, another/.style={draw, fill=black, circle, minimum size = 5.5pt, inner sep=0.1pt}]
	\node[another, label=above:{$\MR(P_{n})$}] (a) at (-1,0) {};
	\node[another, label=above:{$x_{n-1}\MR(P_{n-2})$}] (b) at (-5, -1) {};
	\node[another, label=above:{$x_{n}x_{n-2}\MR(P_{n-3})$}] (c) at (3, -1) {};
	\node[another, label=below:{\footnotesize$\underbrace{x_{n-1}x_{n-2}x_{n-4}\MR(P_{n-5})}_\mathcal{B}$}] (d) at (-3, -3) {};
	\node[another, label=below:{\footnotesize$\underbrace{x_{n-1}x_{n-3}\MR(P_{n-4})}_\mathcal{A}$}] (e) at (-7, -3) {};
	\node[another, label=below:{\footnotesize$\underbrace{x_{n}x_{n-2}x_{n-4}\MR(P_{n-5})}_\mathcal{C}$}] (f) at (1, -2) {};
	\node[another, label=below:{\footnotesize$\underbrace{x_{n}x_{n-2}x_{n-3}x_{n-5}\MR(P_{n-6})}_\mathcal{D}$}] (g) at (5, -2) {};
	\foreach \to/\from in {a/b, a/c, b/d, b/e, c/f, c/g}
	\draw [-] (\to)--(\from);
	\end{tikzpicture}
	\caption{\label{fig:P_n} $\MR(P_{n})$ in $2$ steps}
\end{figure}


\begin{Remark}\label{rk:convention}
	If $n=6$, then Figure~\ref{fig:P_n} is still valid if we make the convention $\MR(P_0)=1$. In this case, the lists $\mathcal{B, C, D}$ each has only one term:
	\[\mathcal{B}= x_{n-1}x_{n-2}x_{n-4}, \, \mathcal{C}= x_{n}x_{n-2}x_{n-4}, \, \mathcal{D}= x_{n}x_{n-2}x_{n-3}x_{n-5}.\]
\end{Remark}


\section{Properties of rooted order and $G(J(P_n)^s)$}
In this section, we will establish some properties of rooted order and minimal generating set of $J(P_n)^s$ which will be useful in the sequel.
\begin{Remark}\label{rk:divisor of the same type}
	Observe that if $n\geq 2$, then every minimal vertex cover of $P_n$ contains either $x_n$ or $x_{n-1}$, but not both. Therefore if $U,V\in F(J(P_n)^s)$ such that $U|V$, then the highest power of $x_n$ (respectively $x_{n-1}$) dividing $U$ is the same as that of $x_n$ (respectively $x_{n-1}$) dividing $V$.
\end{Remark}

\begin{Lemma}\label{lem:every b contains some a}\cite[Lemma~3.5]{second power} 
	Let $n\geq 3$ and let $u \in G(J(P_n)) $ such that $x_n | u$. Then there exists $v \in  G(J(P_{n-2}))$ such that $v$ divides $u/x_n$.
\end{Lemma}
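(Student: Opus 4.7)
The plan is to recast everything in the vertex-cover language. Writing $u = \prod_{x_i \in C} x_i$ for the minimal vertex cover $C$ of $P_n$ corresponding to $u$, it suffices to produce a minimal vertex cover $C'' \subseteq C \setminus \{x_n\}$ of $P_{n-2}$; the monomial $v = \prod_{x_i \in C''} x_i$ then lies in $G(J(P_{n-2}))$ by the standard description of minimal generators of a cover ideal, and it divides $u/x_n$.

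The first step I would carry out is to pin down what membership of $x_n$ in $C$ forces. Since the only edge of $P_n$ incident to $x_n$ is $\{x_{n-1}, x_n\}$, if $x_{n-1}$ also belonged to $C$ then $x_n$ would be redundant, contradicting minimality of $C$. Hence $x_{n-1} \notin C$, and then the edge $\{x_{n-2}, x_{n-1}\}$ (which exists because $n \geq 3$) forces $x_{n-2} \in C$. Next I would verify that $C' := C \setminus \{x_n\} = \{x_i \in C : i \leq n-2\}$ is a vertex cover of $P_{n-2}$: every edge of $P_{n-2}$ has the form $\{x_j, x_{j+1}\}$ with $j \leq n-3$, is therefore also an edge of $P_n$ with both endpoints in $\{x_1, \ldots, x_{n-2}\}$, and so is covered by $C'$. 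Finally I would shrink $C'$ to a minimal vertex cover $C''$ of $P_{n-2}$ by iteratively discarding any vertex whose removal still leaves a cover; since $C'' \subseteq C'$, the associated monomial $v$ divides $\prod_{x_i \in C'} x_i = u/x_n$, as required.

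There is no serious obstacle; the only point that requires a moment's care is that $C'$ itself need not be minimal for $P_{n-2}$ — a vertex that was essential in $P_n$ may become redundant once the edge $\{x_{n-1}, x_n\}$ and the vertex $x_{n-1}$ are removed — which is why the intermediate pruning step is needed rather than simply taking $v = u/x_n$. The base case $n = 3$ is trivial since $G(J(P_1)) = \{1\}$ and $1$ divides everything.
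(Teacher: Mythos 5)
Your argument is correct and complete: the key observations (minimality of $C$ forces $x_{n-1}\notin C$, every edge of $P_{n-2}$ is an edge of $P_n$ avoiding $x_{n-1}$ and $x_n$, and a vertex cover can always be pruned to a minimal one) are all justified, and you rightly flag that $C\setminus\{x_n\}$ need not itself be minimal for $P_{n-2}$, which is exactly why the statement asks only for a divisor of $u/x_n$. The paper does not reprove this lemma but simply cites \cite[Lemma~3.5]{second power}, and your self-contained vertex-cover argument is the standard proof of that result, so there is nothing further to reconcile.
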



\begin{Remark}\label{rk:max expression preserved P_{n-2}}
	Let $n\geq 4$ and let $\MR(P_{n-2})=u_1,\dots ,u_m$. Observe that by definition of rooted order the expression $u_{i_1}\ldots u_{i_s}$ is maximal with $i_1\leq \dots \leq i_s$ if and only if $(x_{n-1}u_{i_1})\dots (x_{n-1}u_{i_s})$ is the maximal expression with $x_{n-1}u_{i_1} \geq_{\MR} \dots \geq_{\MR} x_{n-1}u_{i_s}$ in $\MR(P_n)$.
\end{Remark}

\begin{Remark}\label{rk:max expression preserved P_{n-3}}
	Let $n\geq 5$ and let $\MR(P_{n-3})=u_1,\dots ,u_m$. Observe that by definition of rooted order the expression $u_{i_1}\dots u_{i_s}$ is maximal with $i_1\leq \dots \leq i_s$ if and only if $(x_nx_{n-2}u_{i_1})\dots (x_nx_{n-2}u_{i_s})$ is the maximal expression with $x_nx_{n-2}u_{i_1} \geq_{\MR} \dots \geq_{\MR} x_nx_{n-2}u_{i_s}$ in $\MR(P_n)$.		
\end{Remark}

According to recursive definition of rooted list, for $n\geq 4$ each factor of an $s$-fold product of minimal generators of $J(P_n)$ belongs to either $x_{n-1}\MR(P_{n-2})$ or $x_nx_{n-2}\MR(P_{n-3})$. If all of the factors are from $x_{n-1}\MR(P_{n-2})$ or all of the factors are from $x_nx_{n-2}\MR(P_{n-3})$, then the $s$-fold product is \emph{pure}. Otherwise $s$-fold product is \emph{mixed}. Now, we make some observations on pure and mixed $s$-fold products.
\begin{Lemma}[\textbf{Pure} $\mathbf{s}$\textbf{-fold product divisible by} $\mathbf{x_{n-1}^s}$]\label{lem:product of all As}
	Let $n\geq 3$. Then $U,V\in F(J(P_{n-2})^s)$ if and only if $x_{n-1}^sU, x_{n-1}^sV\in F(J(P_n)^s)$. Moreover, in such case the following statements hold.
	\begin{enumerate}
		\item  $U>_{\MR} V$ if and only if $x_{n-1}^sU>_{\MR} x_{n-1}^sV$.
		\item $U\in G(J(P_{n-2})^s)$ if and only if $x_{n-1}^sU\in G(J(P_n)^s)$.
	\end{enumerate}
	
\end{Lemma}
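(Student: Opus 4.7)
The plan is to decompose the lemma into three logical pieces---the ``iff'' characterization of when $x_{n-1}^s U \in F(J(P_n)^s)$, then parts (1) and (2)---and to reduce each to the recursive structure $\MR(P_n) = x_{n-1}\MR(P_{n-2}), x_n x_{n-2}\MR(P_{n-3})$ provided by Definition~\ref{def:rooted list}. The single structural fact driving everything is that elements of $\MR(P_{n-2})$ use only variables among $x_1, \dots, x_{n-2}$, so they contribute nothing to the $x_{n-1}$- or $x_n$-exponent of any product.

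For the ``iff'' statement, the forward direction is immediate: writing $U = u_{i_1}\cdots u_{i_s}$ with each $u_{i_j} \in G(J(P_{n-2})) = \MR(P_{n-2})$, each $x_{n-1}u_{i_j}$ lies in the prefix $x_{n-1}\MR(P_{n-2})$ of $\MR(P_n) = G(J(P_n))$, so $x_{n-1}^s U = (x_{n-1}u_{i_1})\cdots (x_{n-1}u_{i_s}) \in F(J(P_n)^s)$. For the converse, I would write $x_{n-1}^s U = w_1 \cdots w_s$ with $w_j \in G(J(P_n))$ and count $x_{n-1}$-exponents. Since every minimal vertex cover of $P_n$ contains exactly one of $\{x_{n-1}, x_n\}$, each $w_j$ is squarefree and contributes at most $1$ to the total $x_{n-1}$-exponent, and no element of $\MR(P_{n-2})$ contributes any. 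Hence the right-hand side has $x_{n-1}$-exponent at most $s$, whereas the left-hand side has $x_{n-1}$-exponent $s + \mathrm{exp}_{x_{n-1}}(U) \geq s$. Equality forces $\mathrm{exp}_{x_{n-1}}(U) = 0$ and $w_j \in x_{n-1}\MR(P_{n-2})$ for every $j$; dividing through by $x_{n-1}^s$ then exhibits $U$ as an $s$-fold product in $F(J(P_{n-2})^s)$.

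For (1), I would set $\MR(P_{n-2}) = u_1, \dots, u_m$ and observe that by the recursion $\MR(P_n)$ starts with $x_{n-1}u_1, \dots, x_{n-1}u_m$. By Remark~\ref{rk:max expression preserved P_{n-2}}, if $u_{i_1}\cdots u_{i_s}$ is the maximal expression of $U$ in $F(J(P_{n-2})^s)$, then $(x_{n-1}u_{i_1})\cdots(x_{n-1}u_{i_s})$ is the maximal expression of $x_{n-1}^s U$ in $F(J(P_n)^s)$. Consequently the exponent vector of $x_{n-1}^s U$ relative to $\MR(P_n)$ is zero past the first $m$ coordinates, and on its first $m$ coordinates it reproduces the exponent vector of $U$ relative to $\MR(P_{n-2})$. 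The same description holds for $V$, so the lex comparison that defines $>_\MR$ on $F(J(P_n)^s)$ collapses precisely to the lex comparison that defines $>_\MR$ on $F(J(P_{n-2})^s)$.

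For (2), the main tool is Remark~\ref{rk:divisor of the same type}. Assume $U \in G(J(P_{n-2})^s)$ and suppose $W \in F(J(P_n)^s)$ divides $x_{n-1}^s U$. By that remark, $W$ has the same $x_n$- and $x_{n-1}$-exponents as $x_{n-1}^s U$, namely $0$ and $s$. The ``iff'' already proved then yields $W = x_{n-1}^s W'$ for some $W' \in F(J(P_{n-2})^s)$; cancelling $x_{n-1}^s$ gives $W' \mid U$, and if $W \neq x_{n-1}^s U$ then $W' \neq U$, contradicting minimality of $U$. Hence $x_{n-1}^s U \in G(J(P_n)^s)$. The converse is symmetric: any proper $F(J(P_{n-2})^s)$-divisor $U'$ of $U$ lifts to $x_{n-1}^s U' \in F(J(P_n)^s)$, a proper divisor of $x_{n-1}^s U$. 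I do not anticipate a genuine obstacle here; the one point requiring care is the exponent-counting argument in the ``iff,'' since it is what forces $U$ to be supported on $\{x_1,\dots,x_{n-2}\}$ and underwrites everything downstream.
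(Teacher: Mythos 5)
Your proposal is correct and follows essentially the same route as the paper: the first assertion from the recursive structure of $\MR(P_n)$, part (1) via Remark~\ref{rk:max expression preserved P_{n-2}}, and part (2) via Remark~\ref{rk:divisor of the same type}. The only difference is that you spell out the $x_{n-1}$-exponent count and the divisor argument that the paper dismisses as ``clear'' and ``straightforward,'' and those details are right.
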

\begin{proof}
	The fist statement is clear from the definition of rooted list and Lemma~\ref{lem: rooted list gives linear quotients}. To see (1) let $\MR(P_{n-2})=u_1,\dots ,u_m$.	Suppose that $U=u_{i_1}\dots u_{i_s}$ and $V=u_{j_1}\dots u_{j_s}$ are maximal expressions with $i_1\leq \dots \leq i_s$ and $j_1\leq \dots \leq j_s$. Then by Remark~\ref{rk:max expression preserved P_{n-2}} the expressions $(x_{n-1}u_{i_1})\dots (x_{n-1}u_{i_s})$ and $(x_{n-1}u_{j_1})\dots (x_{n-1}u_{j_s})$ are maximal as well. Suppose $U\neq V$ and let $t$ be the smallest index such that $i_t\neq j_t$. Then
	\[U>_\MR V \Longleftrightarrow \, i_t<j_t \, \Longleftrightarrow \, x_{n-1}^sU>_{\MR} x_{n-1}^sV\]
	as desired. For proof of (2), the direction ($\Leftarrow$) is straightforward and the direction ($\Rightarrow$) follows from Remark~\ref{rk:divisor of the same type}.
\end{proof}

\begin{Lemma}[\textbf{Pure} $\mathbf{s}$\textbf{-fold product divisible by} $\mathbf{x_{n}^s}$]\label{lem:product of all Bs}
	Let $n\geq 4$. Then $U,V\in F(J(P_{n-3})^s)$ if and only if both $x_n^sx_{n-2}^sU$ and $x_n^sx_{n-2}^sV$ belong to $F(J(P_n)^s)$. Moreover, in such case the following statements hold.
	\begin{enumerate}
		\item $U>_{\MR} V$ if and only if $x_n^sx_{n-2}^sU>_{\MR}x_n^s x_{n-2}^sV$.
		\item $U\in G(J(P_{n-3})^s)$ if and only if $x_n^sx_{n-2}^sU\in G(J(P_n)^s)$
	\end{enumerate}	 
\end{Lemma}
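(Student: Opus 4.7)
The plan is to mirror the proof of Lemma~\ref{lem:product of all As}, now exploiting the second branch $x_n x_{n-2}\MR(P_{n-3})$ of the recursive definition of $\MR(P_n)$ rather than the first branch. The main engine will be Remark~\ref{rk:divisor of the same type}: since each element of $G(J(P_n))$ is squarefree and contains exactly one of $x_n,x_{n-1}$, any $s$-fold product whose $x_n$-power equals $s$ must factor into $s$ generators each containing $x_n$, i.e.\ each lying in $x_n x_{n-2}\MR(P_{n-3})$.

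For the first equivalence, I would handle the forward direction directly from Lemma~\ref{lem: rooted list gives linear quotients} and Definition~\ref{def:rooted list}: writing $U = v_{i_1}\cdots v_{i_s}$ with each $v_{i_j}\in G(J(P_{n-3}))$ gives $x_n^s x_{n-2}^s U = \prod_j (x_n x_{n-2} v_{i_j}) \in F(J(P_n)^s)$, and similarly for $V$. Conversely, if $x_n^s x_{n-2}^s U = W_1\cdots W_s$ with each $W_j\in G(J(P_n))$, the observation above forces each $W_j = x_n x_{n-2} v_{i_j}$ with $v_{i_j}\in G(J(P_{n-3}))$, and cancellation yields $U \in F(J(P_{n-3})^s)$.

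For part (1), Remark~\ref{rk:max expression preserved P_{n-3}} shows that an $s$-fold factorization $U = u_{i_1}\cdots u_{i_s}$ in $\MR(P_{n-3})$ with $i_1\leq\cdots\leq i_s$ is maximal if and only if the corresponding factorization $x_n^s x_{n-2}^s U = \prod_j (x_n x_{n-2} u_{i_j})$ in $\MR(P_n)$ is maximal, so comparing the first disagreeing index gives the equivalence exactly as in Lemma~\ref{lem:product of all As}(1). For part (2), the direction ($\Leftarrow$) uses a witness transfer: any proper divisor of $U$ in $J(P_{n-3})^s$ yields, via the first equivalence, a proper divisor of $x_n^s x_{n-2}^s U$ in $J(P_n)^s$, contradicting the assumed minimality of $x_n^s x_{n-2}^s U$. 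For ($\Rightarrow$), suppose $U$ is minimal and some $W\in G(J(P_n)^s)$ divides $x_n^s x_{n-2}^s U$; Remark~\ref{rk:divisor of the same type} forces $x_n^s \mid W$, so by the first equivalence $W = x_n^s x_{n-2}^s W'$ for some $W'\in F(J(P_{n-3})^s)$, and $W'\mid U$ together with minimality of $U$ forces $W' = U$, hence $W = x_n^s x_{n-2}^s U$.

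The hard part, such as it is, will be the reverse direction of the first equivalence, where I must rule out the possibility that some factor in an $s$-fold decomposition of $x_n^s x_{n-2}^s U$ comes from the $x_{n-1}$-branch $x_{n-1}\MR(P_{n-2})$ of $\MR(P_n)$. Remark~\ref{rk:divisor of the same type} supplies exactly this input, and once it is invoked the remainder becomes routine bookkeeping parallel to Lemma~\ref{lem:product of all As}.
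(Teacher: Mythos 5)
Your proposal is correct and follows essentially the same route as the paper, which proves this lemma by declaring it ``similar to the proof of Lemma~\ref{lem:product of all As} using Remark~\ref{rk:max expression preserved P_{n-3}}'': the first equivalence from the recursive definition of the rooted list together with Remark~\ref{rk:divisor of the same type} to force every factor into the $x_nx_{n-2}\MR(P_{n-3})$ branch, part (1) via Remark~\ref{rk:max expression preserved P_{n-3}} and comparison at the first disagreeing index, and part (2) via witness transfer in one direction and Remark~\ref{rk:divisor of the same type} in the other. You simply spell out the details the paper leaves implicit.
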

\begin{proof}
	Similar to proof of Lemma~\ref{lem:product of all As} using Remark~\ref{rk:max expression preserved P_{n-3}}.
\end{proof}
\begin{Lemma}[\textbf{Mixed} $\mathbf{s}$\textbf{-fold product}]\label{lem:mixed product maximal expression and minimal generator}
	Let $\MR(P_{n-2})=u_1,\dots ,u_a$ and let $\MR(P_{n-3})=v_1,\dots ,v_b$ for some $n\geq 4$. Let $U=u_{i_1}\dots u_{i_q}$, $V=v_{j_1}\dots v_{j_k}$ and $W=x_{n-1}^qx_n^kx_{n-2}^kUV$.
	\begin{enumerate}
		\item If $W=(x_{n-1}u_{i_1})\dots (x_{n-1}u_{i_q})(x_nx_{n-2}v_{j_1})\dots (x_nx_{n-2}v_{j_k})$ is the maximal expression in $F(J(P_n)^{k+q})$ with $x_{n-1}u_{i_1} \geq_\MR \dots \geq_\MR x_{n-1}u_{i_q} >_\MR x_nx_{n-2}v_{j_1} \geq_\MR\dots \geq_\MR x_nx_{n-2}v_{j_k}$, then the expression $U=u_{i_1}\dots u_{i_q}$ is maximal in $F(J(P_{n-2})^q)$ with $i_1\leq \dots \leq i_q$ and the expression $V=v_{j_1}\dots v_{j_k}$ is maximal in $F(J(P_{n-3})^k)$  with $j_1\leq \dots \leq j_k$.
\item If $W\in G(J(P_n)^{q+k})$, then $U\in G(J(P_{n-2})^q)$ and $V\in G(J(P_{n-3})^k)$.
\end{enumerate}
\end{Lemma}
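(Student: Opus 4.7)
My plan is to reduce both parts to a single structural observation: how $(q+k)$-fold factorizations of $W$ split across the two halves in the recursive definition of $\MR(P_n)$. By Remark~\ref{rk:divisor of the same type}, each minimal generator of $J(P_n)$ contains exactly one of $x_{n-1}$ and $x_n$, and $W$ is divisible by $x_{n-1}^q x_n^k$ but no higher power of either variable. Consequently, every expression of $W$ as a $(q+k)$-fold product of elements of $\MR(P_n)$ must use exactly $q$ factors from $x_{n-1}\MR(P_{n-2})$ and exactly $k$ from $x_n x_{n-2}\MR(P_{n-3})$. Factoring out the forced contribution $x_{n-1}^q x_n^k x_{n-2}^k$ yields a bijection between such factorizations of $W$ and pairs consisting of a $q$-fold factorization of $U$ in $\MR(P_{n-2})$ together with a $k$-fold factorization of $V$ in $\MR(P_{n-3})$.

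For part~(1) I would feed this bijection into the lex order. By Remarks~\ref{rk:max expression preserved P_{n-2}} and~\ref{rk:max expression preserved P_{n-3}}, the exponent vector (with respect to $\MR(P_n)$) of a factorization of $W$ is exactly the concatenation of the exponent vector of the matching $U$-factorization with respect to $\MR(P_{n-2})$ followed by that of the matching $V$-factorization with respect to $\MR(P_{n-3})$; this uses crucially that in $\MR(P_n)$ every entry from $x_{n-1}\MR(P_{n-2})$ precedes every entry from $x_n x_{n-2}\MR(P_{n-3})$. The lex order on such concatenations then forces maximality of the $W$-expression to split into maximality of both the $U$-expression and the $V$-expression, and the within-block orderings $i_1 \leq \ldots \leq i_q$ and $j_1 \leq \ldots \leq j_k$ translate directly from the hypothesized $\MR$-monotonicity of the $W$-expression via the same two remarks.

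For part~(2) I would argue by contraposition. Suppose $U \notin G(J(P_{n-2})^q)$ and choose $U' \in G(J(P_{n-2})^q)$ properly dividing $U$. Lemma~\ref{lem:product of all As} gives $x_{n-1}^q U' \in F(J(P_n)^q)$, so the product $(x_{n-1}^q U')\cdot (x_n^k x_{n-2}^k V)$ lies in $F(J(P_n)^{q+k})$ and properly divides $W$, contradicting $W \in G(J(P_n)^{q+k})$. The case $V \notin G(J(P_{n-3})^k)$ is entirely symmetric and uses Lemma~\ref{lem:product of all Bs} in place of Lemma~\ref{lem:product of all As}.

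The main points to watch are the degenerate situations in the recursion: for $n=4$ the list $\MR(P_{n-3}) = \MR(P_1) = 1$ collapses to a single term (see Remark~\ref{rk:convention}), and more generally one should verify that the concatenation argument behaves correctly when $q$ or $k$ equals zero, in which case the statement reduces to Lemma~\ref{lem:product of all As} or Lemma~\ref{lem:product of all Bs} respectively. Beyond this bookkeeping I do not anticipate any serious obstacle, since the argument is essentially a direct translation of the recursive structure of $\MR(P_n)$ into the multiplicative structure of its $s$-fold products.
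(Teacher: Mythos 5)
Your proof is correct and supplies exactly the details the paper omits (its own proof reads ``straightforward and left to the reader''): part (1) follows because any lex-improvement of the $U$-expression or the $V$-expression lifts, via Remarks~\ref{rk:max expression preserved P_{n-2}} and~\ref{rk:max expression preserved P_{n-3}} and the block structure of $\MR(P_n)$, to a lex-larger expression of $W$, and part (2) follows from the proper-divisor construction using Lemmas~\ref{lem:product of all As} and~\ref{lem:product of all Bs}. One caution: the ``bijection'' you assert between factorizations of $W$ and pairs of factorizations of $U$ and $V$ is false in general --- the paper's own example $u_4u_5=u_3u_6$ in $\MR(P_7)$ gives a second factorization of the same $W$ whose stripped halves are factorizations of a \emph{different} pair $U'',V''$ with $U''V''=UV$ --- but your argument only ever uses the (valid) map from pairs of factorizations of $U$ and $V$ into factorizations of $W$, so nothing in either part actually breaks.
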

	\begin{proof}
	Proof is straightforward and left to the reader.
	\end{proof}

Note that in the previous lemma, the converses of (1) and (2) are not true. 
\begin{itemize}
	\item Consider $q=k=1$ and $n=7$ with $\MR(P_7)=u_1,\dots ,u_7$. Then $u_4\in x_6\MR(P_5)$ and $u_5\in x_7x_5\MR(P_4)$ but $u_4u_5$ is not the maximal expression as $u_3u_6=u_4u_5$. Thus the converse of (1) is not true.
	\item Consider $q=k=1$ and $n=5$. Then $U=x_1x_3\in G(J(P_3))$ and $V=x_2\in G(J(P_2))$ but $(x_4U)(x_3x_5V)\notin G(J(P_5)^2)$. Thus the converse of (2) is not true.
\end{itemize}

\subsection{Reduction to second powers}
In this section we will reduce the problem of describing minimal generating set of $J(P_n)^s$ to the case when $s=2$. To this end, first we will explicitly describe $G(J(P_n)^s)$ for some small values of $n$. These results will then form the basis step of inductive proof of Theorem~\ref{thm:reduction} which will be our next goal.
\begin{Lemma}\label{lem: s-fold products of paths at most 4 vertices} If $2\leq n\leq 4$, then $G(J(P_n)^s)=F(J(P_n)^s)$ for all $s$. Moreover, in that case every $U\in F(J(P_n)^s)$ has a unique expression as an $s$-fold product of minimal generators of $J(P_n)$. 
\end{Lemma}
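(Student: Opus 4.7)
The plan is to dispatch each case $n=2,3,4$ by explicit computation, using the rooted lists from Definition~\ref{def:rooted list}:
\[\MR(P_2) = x_1, x_2,\qquad \MR(P_3) = x_2, x_1x_3,\qquad \MR(P_4) = x_1x_3, x_2x_3, x_2x_4.\]
By Lemma~\ref{lem: rooted list gives linear quotients}, these lists are exactly $G(J(P_n))$. Writing a generic $s$-fold product as $U = u_1^{a_1}\cdots u_q^{a_q}$ with $a_i \geq 0$ and $\sum_i a_i = s$, I will record the exponent vector of $U$ in the variables $x_1,\ldots, x_n$ in each case.

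For the uniqueness of the expression, my strategy is to isolate, case by case, a set of variables whose exponents in $U$ encode the tuple $(a_1,\ldots, a_q)$ directly. For $n=2$ this is trivial since $a_i$ is just the exponent of $x_i$. For $n=3$ one has $U = x_1^{a_2} x_2^{a_1} x_3^{a_2}$, so the exponent of $x_2$ recovers $a_1$ and the exponent of $x_1$ recovers $a_2$. For $n=4$, a direct check yields $u_1^{a_1} u_2^{a_2} u_3^{a_3} = x_1^{a_1} x_2^{a_2+a_3} x_3^{a_1+a_2} x_4^{a_3}$, so $a_1$ is read off from the $x_1$-exponent, $a_3$ from the $x_4$-exponent, and $a_2$ from the $x_3$-exponent minus $a_1$. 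Two $s$-fold products that equal the same monomial must therefore coincide as formal products.

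For the equality $G(J(P_n)^s) = F(J(P_n)^s)$, the inclusion $G \subseteq F$ is automatic since $J(P_n)^s$ is generated by the $s$-fold products. For the reverse inclusion, I will show that no element of $F(J(P_n)^s)$ is properly divisible by another. Suppose $U = u_1^{a_1}\cdots u_q^{a_q}$ divides $V = u_1^{b_1}\cdots u_q^{b_q}$, both in $F(J(P_n)^s)$. Comparing exponents in the distinguished variables identified above gives $a_i \leq b_i$ for every $i$; combined with $\sum_i a_i = \sum_i b_i = s$, this forces $a_i = b_i$ and hence $U = V$.

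The only mildly delicate case is $n=3$, where the generators $x_2$ and $x_1x_3$ have distinct degrees, so $F(J(P_3)^s)$ contains monomials of different total degrees and one cannot conclude minimality by a crude degree count. The distinguished-variable approach bypasses this: the exponents of $x_1$ and $x_2$ separately determine $a_2$ and $a_1$, and the componentwise divisibility inequalities in those two coordinates, combined with $a_1+a_2 = s$, yield equality as in the previous paragraph.
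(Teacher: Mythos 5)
Your overall strategy --- recording the exponent vector of an $s$-fold product in each variable and exploiting the constraint $\sum_i a_i = s$ --- is exactly the approach of the paper, and your treatment of $n=2$ and $n=3$ is correct, including the observation that a crude degree count fails for $n=3$. The one step that does not hold as written is in the case $n=4$: you claim that comparing exponents in the distinguished variables $x_1,x_4,x_3$ gives $a_i\le b_i$ for every $i$. For $i=2$ this fails, because $a_2$ is encoded as the $x_3$-exponent minus $a_1$, and divisibility only yields $a_1+a_2\le b_1+b_2$; together with $a_1\le b_1$ this gives no upper bound on $a_2$ in terms of $b_2$. Concretely, take $U=u_2^2=x_2^2x_3^2$ and $V=u_1u_2=x_1x_2x_3^2$, so $(a_1,a_2,a_3)=(0,2,0)$ and $(b_1,b_2,b_3)=(1,1,0)$: all three distinguished-variable inequalities and the sum condition are satisfied even though $a_2>b_2$, and the only obstruction to $U\mid V$ is the $x_2$-exponent, which your argument never consults.

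The repair is immediate and is what the paper does: also compare the $x_2$-exponents. By Remark~\ref{rk:divisor of the same type} the $x_3$- and $x_4$-exponents of $U$ and $V$ actually agree, so $a_3=b_3$ and $a_1+a_2=b_1+b_2$; then the $x_1$-exponent gives $a_1\le b_1$ and the $x_2$-exponent gives $a_2+a_3\le b_2+b_3$, hence $a_2\le b_2$, and these two inequalities together with $a_1+a_2=b_1+b_2$ force $a_1=b_1$ and $a_2=b_2$. With this patch your proof coincides with the paper's. (Your uniqueness argument is fine as it stands, since the exponent vector does determine $(a_1,a_2,a_3)$; the gap concerns only the divisibility comparison.)
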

\begin{proof}
\emph{Case 1:} Suppose $n=2$ or $n=3$. Then $\MR(P_n)=u_1,u_2$ where $x_{n-1}$ divides $u_1$ and $x_n$ divides $u_2$. Let $V=u_1^\alpha u_2^\beta$ be an $s$-fold product which divisible by another $s$-fold product $U=u_1^au_2^b$. Since the exponents of $x_n$ in $U$ and $V$ are respectively $b$ and $\beta$ it follows from Remark~\ref{rk:divisor of the same type} that $b=\beta$. Similarly, since the exponents of $x_{n-1}$ are equal we get $a=\alpha$ and $U=V$.

\emph{Case 2:} Suppose $n=4$. Then $\MR (P_4)=u_1, u_2, u_3$ where $u_1=x_1x_3, u_2=x_2x_3, u_3=x_2x_4$. Let $U=u_1^au_2^bu_3^c$ and $V=u_1^\alpha u_2^\beta u_3^\gamma $ be $s$-fold products such that $U$ divides $V$. Remark~\ref{rk:divisor of the same type} implies that $c=\gamma$ and $a+b=\alpha+\beta$. Since the exponents of $x_1$ in $U$ and $V$ are respectively $a$ and $\alpha$ it follows that $a\leq \alpha$. Similarly, comparing exponents of $x_2$ we get $b\leq \beta$. Thus $a=\alpha$, $b=\beta$ and $U=V$.
\end{proof}

\begin{Lemma}\label{lem: s-fold products of paths 5 vertices}
	Let $\MR(P_5)=u_1,u_2,u_3,u_4$. If $U=u_1^\alpha u_2^\beta u_3^\gamma u_4^\delta \in F(J(P_5)^s)\setminus G(J(P_5)^s)$, then $\beta, \delta>0$.
\end{Lemma}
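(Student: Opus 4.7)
The plan is to prove the contrapositive in sharp form: analyze completely what $s$-fold products can divide a given $s$-fold product in $F(J(P_5)^s)$, and show that the only freedom available forces both $\beta>0$ and $\delta>0$. To this end I would first write down $\MR(P_5)$ explicitly from the recursion, giving
\[
u_1=x_2x_4,\quad u_2=x_1x_3x_4,\quad u_3=x_1x_3x_5,\quad u_4=x_2x_3x_5.
\]
The crucial feature is that $u_1,u_2$ are the generators containing $x_4$ (and no $x_5$), while $u_3,u_4$ are the generators containing $x_5$ (and no $x_4$).

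Next, suppose $U=u_1^\alpha u_2^\beta u_3^\gamma u_4^\delta\notin G(J(P_5)^s)$. Then there is a distinct $s$-fold product $V=u_1^a u_2^b u_3^c u_4^d$ with $V\mid U$. I would extract constraints on $(a,b,c,d)$ by comparing the exponent of each variable on the two sides. Remark~\ref{rk:divisor of the same type} applied to $x_4$ and $x_5$ immediately gives the equalities $a+b=\alpha+\beta$ and $c+d=\gamma+\delta$. The divisibility conditions on $x_1,x_2,x_3$ then read
\[
b+c\le\beta+\gamma,\qquad a+d\le\alpha+\delta,\qquad b+c+d\le\beta+\gamma+\delta.
\]
A short manipulation, using the two $x_4,x_5$-equalities to substitute $a-\alpha=\beta-b$ and $d-\delta=\gamma-c$ into the $x_2$-inequality, shows that in fact $b+c\ge\beta+\gamma$, so $b+c=\beta+\gamma$ (and similarly $a+d=\alpha+\delta$). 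Combining these four equalities with the $x_3$-inequality $b\le\beta$ yields the single-parameter family
\[
a=\alpha+t,\qquad b=\beta-t,\qquad c=\gamma+t,\qquad d=\delta-t,
\]
with $t:=\beta-b\ge 0$.

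Because $V\ne U$ one has $t\ge 1$, and because $b,d\ge 0$ one has $t\le\min(\beta,\delta)$. In particular $\beta\ge t>0$ and $\delta\ge t>0$, which is exactly the conclusion of the lemma. As a byproduct (useful for the next results), the argument shows that whenever $\beta,\delta>0$ the product $V=u_1^{\alpha+1}u_2^{\beta-1}u_3^{\gamma+1}u_4^{\delta-1}$ properly divides $U$ with quotient $x_3$, so one even gets the sharper equivalence $U\in G(J(P_5)^s)\iff \beta=0\text{ or }\delta=0$.

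The only real technical point, and the only place where one could slip, is the derivation of $b+c=\beta+\gamma$: it relies on combining the $x_1$ and $x_2$ inequalities through the $x_4$ and $x_5$ equalities rather than on any single variable, so one has to keep the bookkeeping straight. Once that equality is in hand the rest collapses to elementary arithmetic on the single parameter $t$.
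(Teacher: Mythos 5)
Your proof is correct and follows essentially the same route as the paper's: write out $\MR(P_5)$ explicitly and compare variable exponents of a divisor $V$ against those of $U$, using Remark~\ref{rk:divisor of the same type} for $x_4$ and $x_5$. The only difference is cosmetic --- the paper derives $b<\beta$ from a total-degree count where you use the $x_1$- and $x_2$-exponents to pin down the one-parameter family $V=U/x_3^t$ --- and your version has the small bonus of also yielding the converse ($\beta,\delta>0$ implies $U\notin G(J(P_5)^s)$).
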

\begin{proof}
	Let $u_1=x_2x_4$, $u_2=x_1x_3x_4$, $u_3=x_1x_3x_5$, $u_4=x_2x_3x_5$. Let $V=u_1^au_2^bu_3^cu_4^d\in G(J(P_5)^s)$ such that $V|U$. First note that by Remark~\ref{rk:divisor of the same type} we have
	\begin{equation}\label{eq:both s-fold}a+b=\alpha+\beta \, \text{ and } \, c+d=\gamma+\delta .\end{equation}
 Moreover, since the degree of $V$ is less than degree of $U$ we have
	\begin{equation}\label{eq:degrees equal}
	2a+3b+3c+3d < 2\alpha+3\beta +3\gamma +3\delta.
	\end{equation}
	Combining \eqref{eq:both s-fold} and \eqref{eq:degrees equal} we obtain $b<\beta$ and thus $\beta >0$. Then we get $a>\alpha$. Comparing the exponents of $x_2$ in $U$ and $V$ we get $a+d\leq \alpha+\delta$ and thus $\delta>0$.
\end{proof}

\begin{Lemma}\label{lem:trivial side of reduction characterization thm}
	Let $\MR(P_n)=u_1,\dots ,u_r$ with $n\geq 2$ and let $s\geq 2$.
	\begin{enumerate}
		\item If $u_{i_1}\dots u_{i_s}\in G(J(P_n)^s)$, then $u_pu_q\in G(J(P_n)^2)$ for all $p,q\in\{i_1,\dots ,i_s\}$.
		
		\item If $u_{i_1}\dots u_{i_s}$ is the maximal expression for some $i_1\leq \dots \leq i_s$, then for all $p,q\in\{i_1,\dots ,i_s\}$ with $p<q$ the expression $u_pu_q$ is maximal.
		
	\end{enumerate}
\end{Lemma}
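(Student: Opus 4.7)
The plan is to prove both parts by contradiction through a substitution inside the given $s$-fold product; the argument uses only that the entries of $\MR(P_n)$ are distinct squarefree monomial generators of $J(P_n)$, not the recursive structure.

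For (1), I would assume towards a contradiction that $u_pu_q\notin G(J(P_n)^2)$ for some $p,q$ in the list $i_1,\ldots,i_s$. Since $u_pu_q\in J(P_n)^2$, there exists a different $2$-fold product $u_au_b\in F(J(P_n)^2)$ that is a proper monomial divisor of $u_pu_q$. Writing $u_{i_1}\cdots u_{i_s}=u_pu_q\cdot M$, where $M$ is the product of the remaining $s-2$ factors, the expression $u_au_b\cdot M\in F(J(P_n)^s)$ is a proper divisor of $u_{i_1}\cdots u_{i_s}$, contradicting minimality.

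For (2), I would assume towards a contradiction that $u_pu_q$ is not the maximal expression for some $p<q$ among the indices. Then there exists an alternative expression $u_\alpha u_\beta$ with $\alpha\leq\beta$, $u_\alpha u_\beta=u_pu_q$ as monomials, and exponent vector lex-strictly larger than the one with $1$'s at positions $p,q$. The first observation is that $u_pu_q$ is not a perfect square: since $u_p\neq u_q$ are squarefree monomials, some variable appears with multiplicity $1$ in $u_pu_q$, so $\alpha\neq\beta$. Next, a short case check using $u_\alpha u_\beta=u_pu_q$ together with $\alpha\leq\beta$ and $p<q$ rules out each of the cases $\alpha\in\{p,q\}$ and $\beta\in\{p,q\}$ except the trivial coincidence $(\alpha,\beta)=(p,q)$; hence $\{\alpha,\beta\}\cap\{p,q\}=\emptyset$ and the four indices are distinct. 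The strict lex inequality then forces $\alpha<p$. Now replace one $u_p$ and one $u_q$ in $u_{i_1}\cdots u_{i_s}$ by $u_\alpha$ and $u_\beta$; this produces another $s$-fold expression of the same monomial whose exponent vector differs from the original by $-1$ at positions $p,q$ and $+1$ at positions $\alpha,\beta$. Since $\alpha<p\leq q$ and $\alpha<\beta$, the first nonzero coordinate of the difference is $+1$ at position $\alpha$, so the new expression is lex-larger than the original, contradicting maximality.

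The main obstacle, mild as it is, is the bookkeeping in part (2): ruling out the degenerate cases in which $\alpha$ or $\beta$ coincides with one of $p,q$, and verifying that the first nonzero coordinate of the difference of exponent vectors lies at position $\alpha$. Part (1) is essentially a one-line substitution, and once $\alpha<p$ is established, part (2) follows by the same substitution technique.
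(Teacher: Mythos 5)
Your proof is correct and takes essentially the same route as the paper: part (1) is the same substitution of $u_pu_q$ by a strictly dividing element of $G(J(P_n)^2)$ inside the $s$-fold product, and part (2) is the analogous substitution by a lex-larger two-fold expression, which is exactly what the paper intends by ``Proof of (2) is similar.'' The only difference is that you carry out the bookkeeping for (2) explicitly (ruling out $\alpha=\beta$ and the index coincidences, and locating the first nonzero entry of the difference of exponent vectors at $\alpha$), all of which checks out.
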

\begin{proof}
	To see (1) assume for a contradiction $u_{i_1}\dots u_{i_s}\in G(J(P_n)^s)$ but there exist $p,q\in\{i_1,\dots ,i_s\}$ such that $u_pu_q\notin G(J(P_n)^2)$. Then there exists $u_{p'}u_{q'}\in G(J(P_n)^2)$ which strictly divides $u_pu_q$. Then $u_{i_1}\dots u_{i_s}u_{p'}u_{q'}/(u_pu_q)$ is an $s$-fold product and it strictly divides $u_{i_1}\dots u_{i_s}$, contradicting our initial assumption.
	Proof of (2) is similar.
\end{proof}

\begin{Theorem}\label{thm:reduction}
	Let $G(J(P_n))=\{u_1,\dots ,u_r\}$ with $n\geq 2$ and $s\geq 2$. Let $U=u_1^{a_1}\dots u_r^{a_r}$ be an $s$-fold product in $F(J(P_n)^s)$. If $U\notin G(J(P_n)^s)$, then there exist $p$ and $q$ with $a_p,a_q>0$ such that $u_pu_q \notin G(J(P_n)^2)$.
\end{Theorem}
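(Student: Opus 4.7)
The plan is to argue by induction on $n$. The base cases $n\in\{2,3,4\}$ are vacuous because Lemma~\ref{lem: s-fold products of paths at most 4 vertices} gives $G(J(P_n)^s)=F(J(P_n)^s)$. For $n=5$, Lemma~\ref{lem: s-fold products of paths 5 vertices} forces $a_2,a_4>0$ whenever $U\notin G(J(P_5)^s)$, and a direct check shows that $u_1u_3=x_1x_2x_3x_4x_5$ strictly divides $u_2u_4=x_1x_2x_3^2x_4x_5$, so the pair $(p,q)=(2,4)$ meets the conclusion.

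For the inductive step $n\geq 6$, I would use the recursive structure of $\MR(P_n)$ to decompose $U=x_{n-1}^q x_n^k x_{n-2}^k U_1U_2$, where $U_1\in F(J(P_{n-2})^q)$ collects the $q$ list-1 factors of $U$ (those from $x_{n-1}\MR(P_{n-2})$) stripped of $x_{n-1}$, $U_2\in F(J(P_{n-3})^k)$ collects the $k$ list-2 factors stripped of $x_nx_{n-2}$, and $q+k=s$. If $U$ is pure (say $k=0$), Lemma~\ref{lem:product of all As} equates $U\notin G(J(P_n)^s)$ with $U_1\notin G(J(P_{n-2})^s)$; the outer inductive hypothesis on $P_{n-2}$ then yields factors $u',u''\mid U_1$ with $u'u''\notin G(J(P_{n-2})^2)$, and the $s=2$ case of Lemma~\ref{lem:product of all As} lifts this to a bad pair of factors of $U$. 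The pure case $q=0$ is handled analogously via Lemma~\ref{lem:product of all Bs}.

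In the mixed case $q,k\geq 1$, I split further according to whether $U_1$ and $U_2$ are themselves minimal generators of their respective powers. If $U_1\notin G(J(P_{n-2})^q)$ (which forces $q\geq 2$), the outer IH on $P_{n-2}$ supplies factors $u',u''\mid U_1$ with $u'u''\notin G(J(P_{n-2})^2)$, and Lemma~\ref{lem:product of all As} lifts them; the sub-case $U_2\notin G(J(P_{n-3})^k)$ is symmetric via Lemma~\ref{lem:product of all Bs}. The residual sub-case---$U_1\in G(J(P_{n-2})^q)$, $U_2\in G(J(P_{n-3})^k)$, yet $U\notin G(J(P_n)^s)$---is the heart of the proof. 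In this regime Lemma~\ref{lem:trivial side of reduction characterization thm} together with Lemmas~\ref{lem:product of all As} and \ref{lem:product of all Bs} rules out pure bad pairs, so any bad pair must be mixed.

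To handle this hard sub-case I would run a secondary induction on $s$: the base $s=2$ (so $q=k=1$) is trivial since $U$ itself is the failing two-fold product. For $s\geq 3$, assume $q\geq 2$ without loss of generality and excise a list-1 factor $u_{p_1}=x_{n-1}u'_{p_1}$; by Lemma~\ref{lem:trivial side of reduction characterization thm} combined with the outer IH on $P_{n-2}$, $U_1/u'_{p_1}$ remains in $G(J(P_{n-2})^{q-1})$, placing $U/u_{p_1}$ back in the same sub-case at smaller $s$. If $U/u_{p_1}\notin G(J(P_n)^{s-1})$ the inner IH returns a bad pair among its factors, hence of $U$. Otherwise, starting from a strict divisor $V=x_{n-1}^q x_n^k x_{n-2}^k V_1V_2$ of $U$, I plan to run an exchange argument: either some factor of $V_1$ matches a factor of $U_1$ (or analogously for $V_2,U_2$), enabling a cancellation that reduces to a smaller instance, or else the factorizations are fully disjoint, in which case the combinatorial interplay between $G(J(P_{n-2}))$ and $G(J(P_{n-3}))$ inside $G(J(P_n))$---encoded in the class A/B/C/D decomposition of Figure~\ref{fig:P_n}---should force some $u'\mid U_1,v'\mid U_2$ with $u'v'$ strictly divided by $\tilde u'\tilde v'$ for suitable $\tilde u'\in G(J(P_{n-2}))$, $\tilde v'\in G(J(P_{n-3}))$. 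Making this disjoint-factorizations sub-sub-case rigorous is the main technical obstacle I foresee.
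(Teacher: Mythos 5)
Your base cases, your treatment of the pure cases via Lemmas~\ref{lem:product of all As} and \ref{lem:product of all Bs}, and your reduction of the mixed case to the sub-case where $U_1\in G(J(P_{n-2})^q)$ and $U_2\in G(J(P_{n-3})^k)$ all match the paper and are fine. But that residual sub-case is not a technical loose end --- it \emph{is} the theorem. The paper's own example after Lemma~\ref{lem:mixed product maximal expression and minimal generator} ($U=x_1x_3\in G(J(P_3))$, $V=x_2\in G(J(P_2))$, yet $(x_4U)(x_3x_5V)\notin G(J(P_5)^2)$) shows that non-minimality of $U$ can arise purely from the interaction between the two branches, invisibly to $U_1$ and $U_2$ separately; so no argument that keeps the two branches in separate ambient paths $P_{n-2}$ and $P_{n-3}$ can close the gap. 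Your inner induction on $s$ does not help either: $U\notin G(J(P_n)^s)$ does not imply $U/u_{p_1}\notin G(J(P_n)^{s-1})$, so you are thrown back onto the ``exchange argument,'' which you yourself flag as unproved. As written, the proposal has a genuine gap exactly where the difficulty lies.

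The paper closes this gap by two devices you do not have. First, it imports \cite[Lemma~4.1]{second power}: if some factor of $U$ is divisible by $x_{n-1}x_{n-4}$ and another by $x_nx_{n-3}$, that pair is already a non-minimal $2$-fold product, and we are done. Second, in the remaining configurations (Cases 1 and 2 of the paper's proof) it compares exponents of $x_{n-2}$ and $x_{n-3}$ to show that any $U'\in G(J(P_n)^s)$ strictly dividing $U$ must have the \emph{same} distribution over the classes $\mathcal{A},\mathcal{B},\mathcal{C},\mathcal{D}$ of Figure~\ref{fig:P_n} as $U$ does; it then strips the distinguishing variables $x_{n-1}$ and $x_n$ so that \emph{both} branches land in a single smaller path, producing one non-minimal $s$-fold product $U^*$ in $F(J(P_{n-2})^s)$ (Case 1) or $F(J(P_{n-1})^s)$ (Case 2), to which the induction on $n$ applies. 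Note in particular that the induction is invoked at $P_{n-1}$ as well as $P_{n-2}$, which your decomposition never produces. If you want to salvage your outline, the missing ingredients are precisely (i) a standalone proof of the $x_{n-1}x_{n-4}$ / $x_nx_{n-3}$ lemma and (ii) the merge-into-one-smaller-path step; the ``disjoint factorizations'' analysis you sketch would otherwise have to re-derive both from scratch.
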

\begin{proof}
	We use induction on $n$. Suppose that $U\notin G(J(P_n)^s)$. If $n\leq 4$, then the statement is vacuously true by Lemma~\ref{lem: s-fold products of paths at most 4 vertices}. If $n=5$, then $u_1u_3$ strictly divides $u_2u_4$ and the statement is true by Lemma~\ref{lem: s-fold products of paths 5 vertices}. Therefore let us assume that $n\geq 6$.
	
	 Keeping Figure~\ref{fig:P_n} in mind, observe that if $x_{n-1}^s$ divides $U$, then the result follows from Lemma~\ref{lem:product of all As} and the induction assumption on $P_{n-2}$. Similarly, if $x_{n}^s$ divides $U$, then the result follows from Lemma~\ref{lem:product of all Bs} and the induction assumption on $P_{n-3}$. Therefore, let us assume that $U$ is divisible by $x_nx_{n-1}$.

	If there exist $p$ and $q$ with $a_p,a_q>0$ such that $x_{n-4}x_{n-1}|u_p$ and $x_{n-3}x_n|u_q$, then the result follows from \cite[Lemma~4.1]{second power}.  Therefore, it suffices to consider the following cases:
	
	\emph{Case 1:} Suppose that $U$ is product of factors from $\mathcal{A, B,C}$ in Figure~\ref{fig:P_n} such that at least one factor from $\mathcal{A}$ or $\mathcal{B}$ is divisible by $x_{n-4}$. Then we can write
	\[U=(x_{n-1}x_{n-3})^\alpha V (x_{n-1}x_{n-2}x_{n-4})^\beta W(x_{n}x_{n-2}x_{n-4})^\gamma Y\]
	for some $V\in F(J(P_{n-4})^\alpha)$, $W\in F(J(P_{n-5})^\beta)$, $Y\in F(J(P_{n-5})^\gamma)$.
	Let $U'\in G(J(P_n)^s)$ such that $U'$ strictly divides $U$. Keeping Remark~\ref{rk:convention} in mind, suppose that 
	\[U'=(x_{n-1}x_{n-3})^{\alpha'} V' (x_{n-1}x_{n-2}x_{n-4})^{\beta'} W'(x_{n}x_{n-2}x_{n-4})^{\gamma'} Y' (x_{n}x_{n-2}x_{n-3}x_{n-5})^{\delta'} Z'\]
	for some $V'\in F(J(P_{n-4})^{\alpha'})$, $W'\in F(J(P_{n-5})^{\beta'})$, $Y'\in F(J(P_{n-5})^{\gamma'})$, $Z'\in F(J(P_{n-6})^{\delta'})$.
	We claim that 
	\[(\alpha, \beta, \gamma, 0)= (\alpha', \beta', \gamma', \delta').\]
	By Remark~\ref{rk:divisor of the same type} we have $\alpha+\beta=\alpha'+\beta'$ and $\gamma=\gamma'+\delta'$. Since the exponent of $x_{n-2}$ in $U'$ is less than or equal to that of $U$ we have
	\[\beta+\gamma\geq \beta'+\gamma'+\delta'.\]
	Similarly, since the exponent of $x_{n-3}$ in $U'$ is less than or equal to that of $U$ we have
	\[\alpha \geq \alpha'+\delta'.\]
	Then adding up the inequalities we get $\delta'=0$. Then $\gamma=\gamma'+\delta'$ implies $\gamma=\gamma'$. Therefore $\alpha=\alpha'$ and $\beta=\beta'$ as desired.
	
	Therefore, $V'W'Y'$ strictly divides $VWY$. By recursive definition of $\MR(P_{n-2})$  (see Figure~\ref{fig:P_n-2}) observe that 
	$$U^*=x_{n-3}^\alpha V (x_{n-2}x_{n-4})^\beta W (x_{n-2}x_{n-4})^\gamma Y \in F(J(P_{n-2})^s)\setminus G(J(P_{n-2})^s).$$
	Then by induction assumption on $P_{n-2}$, one of $V,W$ or $Y$ contains a non-minimal $2$-fold product. By adding the suitable variables, one can see that $U$ satisfies the desired condition.
	
	\emph{Case 2:} Suppose that $U$ is product of factors from $\mathcal{A,C,D}$ such that no factor from $\mathcal{A}$ is divisible by $x_{n-4}.$ Then we can write
		\[U=(x_{n-1}x_{n-3})^\mu V (x_{n}x_{n-2})^\nu X\] for some $V\in F(J(P_{n-4})^\mu)$, $X\in F(J(P_{n-3})^\nu)$, where $\mu, \nu >0$ and $\mu+\nu=s$. We claim that $U$ is divisible by some $U'\in G(J(P_n)^s)$ of the same form. Indeed, if 
		\[U'=(x_{n-1}x_{n-3})^{\mu'} V' (x_{n-1}x_{n-2}x_{n-4})^{\beta'}W'(x_{n}x_{n-2})^{\nu'} X'\]
		for some $V'\in F(J(P_{n-4})^{\mu'})$, $W'\in F(J(P_{n-5})^{\beta'})$ and $X' \in F(J(P_{n-3})^{\nu'})$, then we must have $\mu'+\beta'=\mu$ and $\nu=\nu'$ by Remark~\ref{rk:divisor of the same type}. Then comparing the exponents of $x_{n-2}$ in $U$ and $U'$ we see that $\beta'=0$.
		
		Therefore, $V'X'$ strictly divides $VX$. Then by recursive definition of $\MR(P_{n-1})$ observe that 
		$$U^*=x_{n-2}^\nu X (x_{n-1}x_{n-3})^\mu V \in F(J(P_{n-1})^s)\setminus G(J(P_{n-1})^s). $$
		Then by induction assumption on $P_{n-1}$, either $V$ or $X$ contains a non-minimal $2$-fold product. By adding the suitable variables, one can see that $U$ satisfies the desired condition.  
\end{proof}
\begin{figure}[hbt!]
	\centering
	\begin{tikzpicture}
	[scale=1.00, vertices/.style={draw, fill=black, circle, minimum size = 4pt, inner sep=0.5pt}, another/.style={draw, fill=black, circle, minimum size = 5.5pt, inner sep=0.1pt}]
	\node[another, label=above:{$\MR(P_{n-2})$}] (a) at (0,0) {};
	\node[another, label=below:{$x_{n-3}\MR(P_{n-4})$}] (b) at (-2, -1) {};
	\node[another, label=below:{$x_{n-2}x_{n-4}\MR(P_{n-5})$}] (c) at (2, -1) {};
	
	\foreach \to/\from in {a/b, a/c}
	\draw [-] (\to)--(\from);
	\end{tikzpicture}
	\caption{\label{fig:P_n-2} Recursive definition of rooted list of $P_{n-2}$}
\end{figure}
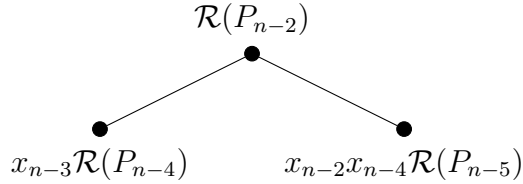
As a consequence of Theorem~\ref{thm:reduction} we characterize minimal generating set of $J(P_n)^s$ for $s\geq 3$ in terms of minimal generating set of second power of $J(P_n)$.

\begin{Corollary}\label{cor: characterization}
	Let $G(J(P_n))=\{u_1, \dots ,u_r\}$ and let $s\geq 2$. The following statements are equivalent.
	\begin{enumerate}
		\item $u_{i_1}\dots u_{i_s}\in G(J(P_n)^s)$.
		\item $u_pu_q\in G(J(P_n)^2)$ for all $p,q\in\{i_1,\dots ,i_s\}$.
		\end{enumerate}
\end{Corollary}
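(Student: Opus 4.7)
The plan is to obtain this corollary as an immediate consequence of the two preceding results, with Lemma \ref{lem:trivial side of reduction characterization thm} handling the direction $(1) \Rightarrow (2)$ and Theorem \ref{thm:reduction} supplying the direction $(2) \Rightarrow (1)$. No fresh combinatorial or algebraic input is required; the entire technical burden has already been discharged in the proof of Theorem \ref{thm:reduction}.

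For the forward implication $(1) \Rightarrow (2)$, I would simply cite Lemma \ref{lem:trivial side of reduction characterization thm}(1), which is verbatim the assertion that every $2$-fold subproduct of a minimal $s$-fold generator is itself a minimal $2$-fold generator.

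For the converse $(2) \Rightarrow (1)$, I would argue by contrapositive. Assume that $U = u_{i_1}\dots u_{i_s}$ does not belong to $G(J(P_n)^s)$. Collecting equal factors, write $U = u_1^{a_1}\dots u_r^{a_r}$ where $a_j = \#\{k : i_k = j\}$, so that $U \in F(J(P_n)^s) \setminus G(J(P_n)^s)$. Theorem \ref{thm:reduction} then produces indices $p, q$ with $a_p, a_q > 0$ such that $u_p u_q \notin G(J(P_n)^2)$. The positivity of $a_p$ and $a_q$ guarantees that $p$ and $q$ both occur in the multiset $\{i_1, \dots, i_s\}$ (allowing $p = q$ when some $a_p \geq 2$, giving $u_p^2 \notin G(J(P_n)^2)$), so condition (2) fails.

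Because the deep work lies entirely within Theorem \ref{thm:reduction}, there is no genuine obstacle at the level of the corollary; the proof amounts to translating the exponent-vector formulation of Theorem \ref{thm:reduction} into the multiset-of-indices language used here. The only minor point to be careful about is the distinction between the underlying set of indices and the multiset $\{i_1, \dots, i_s\}$: the quantifier in (2) must range over the multiset so that exponents $\geq 2$ are detected, and with this reading the two statements match exactly.
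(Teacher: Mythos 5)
Your proof is correct and matches the paper's own argument exactly: the paper derives the corollary as "immediate from" Lemma~\ref{lem:trivial side of reduction characterization thm} (for $(1)\Rightarrow(2)$) and Theorem~\ref{thm:reduction} (for $(2)\Rightarrow(1)$ via the contrapositive). Your added remark about reading the quantifier in (2) over the multiset of indices is a sensible clarification but does not change the route.
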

\begin{proof}
	Immediate from Lemma~\ref{lem:trivial side of reduction characterization thm} and Theorem~\ref{thm:reduction}.
\end{proof}

Given a monomial ideal $I$, let $\mu(I)$ denote the cardinality of $G(I)$. If $G(I)=\{u_1,\dots ,u_q\}$, then by counting the number of $s$-element multi-subsets of $[q]=\{1,\dots ,q\}$ one can see that $\mu(I^s)\leq {{q+s-1}\choose{q-1}}$. This upper bound may not be achieved in general for two reasons. Firstly, a product of the form $u_{i_1}\dots u_{i_s}$ may be equal to another product $u_{j_1}\dots u_{j_s}$ with $\{i_1,\dots ,i_s\}\neq \{j_1,\dots ,j_s\}$ as multi-sets. Secondly, $u_{i_1}\dots u_{i_s}$ may be strictly divisible by another product $u_{j_1}\dots u_{j_s}$. In fact, when $I$ is generated by monomials of the same degrees, the latter cannot happen. Therefore, although the computation of $\mu(I^s)$ is a challenging problem, one can describe the set $G(I^s)$ explicitly when $I$ is generated in the same degree. On the other hand, when $I$ is not generated in the same degree, description of $G(I^s)$ remains a difficult problem as well as computation of $\mu(I^s)$.

It is well-known (\cite{kodiyalam}) that the function $g(s)=\mu(I^s)$ is a polynomial in $s$ for $s\gg 0$. In \cite{h1}, the authors addressed the question of how small $\mu(I^2)$ can be in terms of $\mu(I)$ when $I$ is a monomial ideal in polynomial ring with $n=2$ variables. Behaviour of $\mu(I^s)$ was considered in some other articles, see for example \cite{abdolmaleki, gasanova, h2, freiman}. Recently, Drabkin and Guerrieri \cite{drabkin} studied Freiman cover ideals. Given a cover ideal $J(G)$, it is a demanding task to find the minimal generating set of $J(G)^s$ or $\mu(J(G)^s)$. Therefore, Corollary~\ref{cor: characterization} might be of interest in computation of $\mu(J(P_n)^s)$.

We will next see how Theorem~\ref{thm:reduction} will be useful to extend the following result to all powers of $J(P_n)$.

\begin{Lemma}\label{lem:second power crucial}\cite[Lemma~4.5]{second power}
		Let $U \in F(J(P_n)^2)\setminus G(J(P_n)^2)$. Then there exists  $V\in G(J(P_n)^2)$ such that $V>_{\MR}  U$ and $V|U$.
\end{Lemma}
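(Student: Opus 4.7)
The plan is induction on $n$, closely following the recursive decomposition of $\MR(P_n)$ depicted in Figure~\ref{fig:P_n}. For the base cases with $n\leq 4$, Lemma~\ref{lem: s-fold products of paths at most 4 vertices} guarantees $F(J(P_n)^2)=G(J(P_n)^2)$, so the statement is vacuous. For $n=5$, Lemma~\ref{lem: s-fold products of paths 5 vertices} (applied with $s=2$) forces any non-minimal $2$-fold product to be divisible by $u_1 u_3$ in the notation $\MR(P_5)=u_1,u_2,u_3,u_4$, and a direct inspection of the rooted order confirms $u_1 u_3 >_{\MR} U$ in each such case.

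For the inductive step $n\geq 6$, write $U=F_1 F_2$ as a maximal expression where each $F_i$ lies either in $x_{n-1}\MR(P_{n-2})$ or in $x_n x_{n-2}\MR(P_{n-3})$. The two \emph{pure} cases are handled by reduction. If both factors lie in $x_{n-1}\MR(P_{n-2})$, then $U=x_{n-1}^2 U'$ with $U'\in F(J(P_{n-2})^2)\setminus G(J(P_{n-2})^2)$ by Lemma~\ref{lem:product of all As}(2). The induction hypothesis on $P_{n-2}$ produces $V'\in G(J(P_{n-2})^2)$ with $V'\mid U'$ and $V'>_{\MR} U'$; then $V=x_{n-1}^2 V'$ divides $U$, lies in $G(J(P_n)^2)$ by Lemma~\ref{lem:product of all As}(2), and satisfies $V>_{\MR} U$ by Lemma~\ref{lem:product of all As}(1). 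The case where both factors lie in $x_n x_{n-2}\MR(P_{n-3})$ is entirely analogous using Lemma~\ref{lem:product of all Bs}.

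The \emph{mixed} case has $U=(x_{n-1}u)(x_n x_{n-2}v)$ with $u\in G(J(P_{n-2}))$ and $v\in G(J(P_{n-3}))$. Since $U\notin G(J(P_n)^2)$, a strict minimal divisor exists, and Remark~\ref{rk:divisor of the same type} forces it to preserve the exponents of $x_{n-1}$ and $x_n$, so it must again have the form $V=(x_{n-1}u')(x_n x_{n-2}v')$ with $u'\in G(J(P_{n-2}))$, $v'\in G(J(P_{n-3}))$, and $u'v'$ a strict divisor of $uv$. The remaining task is to choose such $(u',v')$ so that the pair $(x_{n-1}u',\, x_n x_{n-2}v')$ is lexicographically earlier than $(x_{n-1}u,\, x_n x_{n-2}v)$ in the rooted list of $\MR(P_n)$, which by Remarks~\ref{rk:max expression preserved P_{n-2}} and~\ref{rk:max expression preserved P_{n-3}} reduces to controlling the positions of $u',v'$ within $\MR(P_{n-2})$ and $\MR(P_{n-3})$. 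Using Lemma~\ref{lem:every b contains some a} one may replace $x_n x_{n-2} v$ by an earlier candidate whenever $v$ and $u$ share a redundant variable, which shifts the second factor leftward in $\MR(P_{n-3})$ while keeping the first factor unchanged.

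The hardest part is exactly this mixed case: there is no canonical strict minimal divisor, and one must check that among the possible choices of $(u',v')$ at least one genuinely moves left in the rooted order, not merely downward in divisibility. The key structural observation that makes this work is that redundancy in $U$ always comes from a variable $x_{n-4}$ or $x_{n-3}$ shared across the $\mathcal{A}$-$\mathcal{C}$ or $\mathcal{B}$-$\mathcal{C}$ interface in Figure~\ref{fig:P_n}; eliminating such a shared variable forces the corresponding factor to migrate into a branch that appears earlier in the recursive construction of $\MR(P_n)$, thereby yielding $V>_{\MR} U$.
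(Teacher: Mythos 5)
First, a point of reference: the paper does not prove this lemma at all --- it is imported verbatim from \cite[Lemma~4.5]{second power} --- so there is no internal proof to compare against, and your attempt has to stand on its own. The base cases and the two pure cases of your induction are correct: the reductions via Lemma~\ref{lem:product of all As} and Lemma~\ref{lem:product of all Bs} transfer both divisibility, membership in the minimal generating set, and the rooted order, exactly as you say. The problem is the mixed case, which you rightly identify as the crux but do not actually prove; both mechanisms you offer for it fail.

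Concretely: (i) ``replacing $x_nx_{n-2}v$ by an earlier candidate while keeping the first factor unchanged'' can never produce a strict divisor. If $V=(x_{n-1}u)(x_nx_{n-2}v')$ strictly divides $U=(x_{n-1}u)(x_nx_{n-2}v)$, then $v'$ strictly divides $v$, so the minimal vertex cover $v'$ of $P_{n-3}$ would be a proper subset of the minimal vertex cover $v$, which is impossible; so any strict divisor must change the first factor, and the only real question is whether it can always be made to move \emph{left} in $\MR(P_{n-2})$. Moreover, Lemma~\ref{lem:every b contains some a} outputs a generator of $J(P_{n-2})$, i.e.\ a factor living in the $x_{n-1}$-branch; substituting it changes the exponents of $x_{n-1}$ and $x_n$ and destroys divisibility into $U$ by Remark~\ref{rk:divisor of the same type}. (ii) The ``key structural observation'' that redundancy always comes from $x_{n-4}$ or $x_{n-3}$ shared across the interface is false. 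Take $n=9$, $u=x_1x_3x_4x_6\in G(J(P_7))$ and $v=x_2x_3x_5\in G(J(P_6))$; then $U=(x_8u)(x_9x_7v)=x_1x_2x_3^2x_4x_5x_6x_7x_8x_9$ is a mixed non-minimal $2$-fold product with strict minimal divisor $(x_2x_4x_6x_8)(x_1x_3x_5x_7x_9)=x_1x_2\cdots x_9$: the redundant variable is $x_3$, nowhere near the $\mathcal{A}$--$\mathcal{C}$ interface. The conclusion of the lemma does hold there (the divisor's maximal expression begins with the very first element of $\MR(P_9)$), but not for the reason you give. What the mixed case genuinely requires is a proof that some strict minimal divisor $(x_{n-1}u')(x_nx_{n-2}v')$ has maximal expression beginning strictly earlier than $x_{n-1}u$ in $\MR(P_n)$; this needs its own induction or a case analysis in the spirit of \cite[Lemma~4.1]{second power}, and your sketch does not supply it.
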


\begin{Lemma}\label{lem:crucial}
	Let $U \in F(J(P_n)^s)\setminus G(J(P_n)^s)$. Then there exists  $V\in G(J(P_n)^s)$ such that $V>_{\MR}  U$ and $V|U$.
\end{Lemma}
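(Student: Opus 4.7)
The plan is to reduce to the $s=2$ case via Theorem~\ref{thm:reduction} and then bootstrap using Lemma~\ref{lem:second power crucial}. Write $U = u_1^{a_1}\cdots u_r^{a_r}$ for the maximal expression of $U$. By Theorem~\ref{thm:reduction} there are indices $p \le q$ with $a_p, a_q > 0$ such that the $2$-fold product $u_p u_q$ does not lie in $G(J(P_n)^2)$. Applying Lemma~\ref{lem:second power crucial} to $u_p u_q$ furnishes some $W \in G(J(P_n)^2)$ with $W \mid u_p u_q$ strictly (as monomials) and $W >_{\MR} u_p u_q$. Let $W = u_{p'}u_{q'}$, $p' \le q'$, be the maximal expression of $W$.

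The pivotal observation is that, because $(a_1,\ldots,a_r)$ is the maximal expression of $U$, the ``obvious'' vector $(0,\ldots,1_p,\ldots,1_q,\ldots,0)$ (or $(0,\ldots,2_p,\ldots,0)$ if $p=q$) must already be the maximal expression of $u_pu_q$ in $F(J(P_n)^2)$. Indeed, if some other vector $(c_1,\ldots,c_r)$ with $\prod u_i^{c_i}=u_pu_q$ were lex-strictly larger, then substituting it for $u_pu_q$ inside $U$ would produce a new expression of $U$ with exponent vector $(a_i - [i=p] - [i=q] + c_i)_i$, whose difference from $(a_1,\ldots,a_r)$ has its first nonzero entry positive — contradicting maximality. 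Hence the inequality $W >_{\MR} u_pu_q$ given by Lemma~\ref{lem:second power crucial} translates into the clean statement that $(0,\ldots,1_{p'},\ldots,1_{q'},\ldots,0)$ is lex-strictly larger than $(0,\ldots,1_p,\ldots,1_q,\ldots,0)$.

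Now set $U_1 := U\cdot W / (u_p u_q)$. Since $W$ strictly divides $u_p u_q$, $U_1$ is a proper monomial divisor of $U$, and replacing the factors $u_p u_q$ by $u_{p'} u_{q'}$ in the maximal expression of $U$ exhibits $U_1$ as an element of $F(J(P_n)^s)$ whose exponent vector differs from $(a_1,\ldots,a_r)$ by exactly the lex-positive vector computed above. Thus this particular expression of $U_1$ is already lex-larger than the maximal expression of $U$, and \emph{a fortiori} the maximal expression of $U_1$ is. In other words, $U_1 \mid U$ strictly and $U_1 >_{\MR} U$. If $U_1 \in G(J(P_n)^s)$ take $V = U_1$; otherwise repeat the construction with $U_1$ in place of $U$. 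The degree strictly decreases at each step, so the iteration terminates after finitely many steps with the desired $V \in G(J(P_n)^s)$ satisfying $V \mid U$ and $V >_{\MR} U$ by transitivity.

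The step I expect to require the most care is the translation between the two different ``homes'' of the lex comparison: the inequality $W >_{\MR} u_p u_q$ lives in $F(J(P_n)^2)$, while the desired inequality $U_1 >_{\MR} U$ lives in $F(J(P_n)^s)$. The bridge is exactly the observation in the second paragraph that the maximality of $(a_1,\ldots,a_r)$ pins down the maximal expression of $u_pu_q$ to its ``obvious'' form; without this, the swap from $u_p u_q$ to $u_{p'}u_{q'}$ need not produce a lex-increase in the exponent vector of the ambient $s$-fold product, and the induction on degree would collapse.
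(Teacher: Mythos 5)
Your argument is correct and follows essentially the same route as the paper: invoke Theorem~\ref{thm:reduction} to locate a non-minimal $2$-fold factor $u_pu_q$, apply Lemma~\ref{lem:second power crucial} to replace it by a strictly dividing, rooted-larger pair, and iterate. The only difference is that you reprove inline the fact that $u_pu_q$ inherits maximality of its expression from that of $U$ (and that the swap therefore lex-increases the ambient exponent vector), whereas the paper delegates this to Lemma~\ref{lem:trivial side of reduction characterization thm}(2); your explicit justification of that bridge is a welcome addition rather than a deviation.
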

\begin{proof}
	Let $\MR(P_n)=u_1, \dots, u_p$ with $n\geq 2$. Let $U=u_1^{\alpha_1}\ldots u_p^{\alpha_p}$ be the maximal expression. By Theorem~\ref{thm:reduction} there exists $u_iu_j$ with $\alpha_i,\alpha_j\neq 0$ and $u_iu_j\notin G(J(P_n)^2)$. Without loss of generality assume that $i\leq j$. Note that $u_iu_j$ is the maximal expression by Lemma~\ref{lem:trivial side of reduction characterization thm}. Then by Lemma~\ref{lem:second power crucial} there exists $v \in G(J(P_n)^2)$ such that $v$ strictly divides $u_iu_j$ and $v>_\MR u_iu_j$. Let $v=u_ku_\ell$ be the maximal expression with $k\leq \ell$. Consider the $s$-fold product $V=(Uu_ku_\ell)/(u_iu_j)$. Observe that $V>_\MR U$ and $V$ strictly divides $U$. If $V$ is a minimal generator, then we are done, otherwise this process can be repeated.
\end{proof}


\section{Linear quotients of $J(P_n)^s$ with respect to rooted order}
In this section we will show that $J(P_n)^s$ has linear quotients with respect to rooted order. Before that, we prove the following result which will be crucial in the last case of proof of Theorem~\ref{thm:main thm}.
\begin{Proposition}\label{prop:smart claim generalized}
	Let $\MR(P_n)=u_1, \dots , u_q$ and let $\MR(J(P_n)^s)=Y_1, \dots , Y_p$. Suppose that $Y_r=u_{i_1}\dots u_{i_s}$ is the maximal expression for some $2\leq r\leq p$ with $i_1\leq \dots \leq i_s$. 
	\begin{enumerate}
		\item For each $1\leq t \leq s$ with $2\leq i_t$ we have
		\[(u_1,u_2,\dots ,u_{i_t-1}):(u_{i_t})\subseteq (Y_1,\dots ,Y_{r-1}):(Y_r).\]
		\item If $x_n|Y_r$, then $x_{n-1}\in (Y_1,\dots ,Y_{r-1}):(Y_r).$
		\end{enumerate}
\end{Proposition}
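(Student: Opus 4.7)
The plan is to reduce both parts to the linear-quotient structure of $J(P_n)$ itself (Lemma~\ref{lem: rooted list gives linear quotients}), bridging from the colon ideal of $u_{i_t}$ in $J(P_n)$ to the colon ideal of $Y_r$ in $J(P_n)^s$ via Lemma~\ref{lem:crucial}, which promotes $s$-fold products to minimal generators while preserving order.

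For (1), I would exploit that $(u_1,\dots,u_{i_t-1}):(u_{i_t})$ is generated by variables, so the inclusion only needs to be checked on variables. Given a variable $x$ in that colon ideal, pick $j<i_t$ with $u_j\mid xu_{i_t}$, and form the $s$-fold product $V=Y_r u_j/u_{i_t}$, which divides $xY_r$ by construction. Writing $Y_r=u_1^{a_1}\cdots u_q^{a_q}$ as the given maximal expression, the expression of $V$ obtained by adding $1$ to $a_j$ and subtracting $1$ from $a_{i_t}$ agrees with $(a_1,\dots,a_q)$ up to position $j-1$ and is strictly larger at position $j$; hence its maximal expression is lex-larger than that of $Y_r$, so $V>_{\MR} Y_r$. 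If $V\in G(J(P_n)^s)$, then $V=Y_{r'}$ for some $r'<r$; otherwise Lemma~\ref{lem:crucial} yields $V'\in G(J(P_n)^s)$ with $V'\mid V$ and $V'>_{\MR} V>_{\MR} Y_r$, again of the form $V'=Y_{r'}$ with $r'<r$. Either way $Y_{r'}\mid xY_r$ and $r'<r$, giving $x\in (Y_1,\dots,Y_{r-1}):(Y_r)$.

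For (2), by part (1) it suffices to exhibit $t$ with $x_n\mid u_{i_t}$, $i_t\geq 2$, and $x_{n-1}\in (u_1,\dots,u_{i_t-1}):(u_{i_t})$. Since $u_1\in x_{n-1}\MR(P_{n-2})$ by the recursive definition of $\MR(P_n)$, it is divisible by $x_{n-1}$ and not by $x_n$, so any $t$ with $x_n\mid u_{i_t}$ automatically has $i_t\geq 2$. For $n\geq 3$ (using the convention $\MR(P_0)=1$ when $n=3$), I would write $u_{i_t}=x_nx_{n-2}v$ with $v\in G(J(P_{n-3}))$, observe that $x_{n-1}x_{n-2}v$ is a (possibly non-minimal) vertex cover of $P_n$, and take $u_j$ to be any minimal vertex cover contained in its support. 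Because $v$ involves only $x_1,\dots,x_{n-3}$, the variable $x_n$ does not appear in $x_{n-1}x_{n-2}v$, so $u_j$ must be divisible by $x_{n-1}$; by the ordering of $\MR(P_n)$ this forces $j<i_t$, and $u_j\mid x_{n-1}x_{n-2}v\mid x_{n-1}u_{i_t}$ finishes the argument. The case $n=2$ is immediate: $u_1=x_1$ divides $x_1 u_2$.

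The hardest step, I expect, is the passage in (1) from $V$ to a minimal generator in $(Y_1,\dots,Y_{r-1})$: one needs the perturbation of the maximal expression of $Y_r$ to produce a vector that beats it in the lex order, which then transfers to the maximal expression of $V$ via the trivial upper bound on maximal expressions, and one needs transitivity of $>_{\MR}$ through the divisor $V'$ produced by Lemma~\ref{lem:crucial}. The remaining steps are either direct consequences of the recursive definition of $\MR(P_n)$ or standard facts about minimal vertex covers.
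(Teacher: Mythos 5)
Your proof is correct and takes essentially the same route as the paper: for part (1) both arguments form the $s$-fold product $Y_ru_j/u_{i_t}$, observe that it is rooted-larger than $Y_r$ with colon equal to the given variable, and promote it to a minimal generator of $J(P_n)^s$ via Lemma~\ref{lem:crucial}. The only divergence is in part (2), where the paper simply cites \cite[Lemma~3.6]{second power} for the containment $x_{n-1}\in(u_1,\dots,u_{i_k-1}):(u_{i_k})$, whereas you reprove that fact directly from the recursive structure of $\MR(P_n)$; both are fine.
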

\begin{proof}
	(1): First note that by Lemma~\ref{lem: rooted list gives linear quotients} the ideal $(u_1,u_2,\dots ,u_{i_t-1}):(u_{i_t})$ is generated by variables. Let $\ell <i_t$ with $u_\ell:u_{i_t}=x_z$ for some variable $x_z$. Consider the $s$-fold product $M=Y_ru_\ell /u_{i_t}$. Then $M:Y_r=x_z$ and $M\root Y_r$. If $M$ is a minimal generator, nothing is left to show. Otherwise, by Lemma~\ref{lem:crucial} there exists $M'\in G(J(P_n)^s)$ such that $M'>_\MR M$ and $M'|M$. Then since $M'\neq Y_r$ and $M':Y_r$ divides $M:Y_r$ it follows that $M':Y_r=x_z$ and $x_z\in  (Y_1,\dots ,Y_{r-1}):(Y_r)$.
	
	(2): Suppose that $x_n$ divides $u_{i_k}$ for some $k\in\{1,\dots ,s\}$. Then by definition of rooted list $i_k\geq 2$. By part (1) it suffices to show that 
	\[x_{n-1}\in (u_1,u_2,\dots ,u_{i_k-1}):(u_{i_k})\]
	which is immediate from \cite[Lemma~3.6]{second power}.
\end{proof}

We will also need the following result from \cite{second power}.

\begin{Proposition}\label{prop:condition for non-minimal gen or non-maximal expression}\cite[Proposition 4.2]{second power}
	Let $\MR(P_{n})=u_1,\dots , u_k$ where $n\geq 2$.
	Let $1< i<j\leq k$. Suppose $u_j$ contains a variable from $(u_1,\dots ,u_{i-1}):(u_i)$. Then either $u_iu_j$ is not a minimal generator of $J(P_n)^2$ or $u_iu_j$ is not the maximal $2$-fold expression.
\end{Proposition}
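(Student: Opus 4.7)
The plan is to leverage the linear-quotients structure of $J(P_n)$ to extract an earlier generator $u_\ell$ that divides $u_iu_j$, and then to find a second minimal generator $u_m$ producing a $2$-fold factorization $u_\ell u_m$ that divides $u_iu_j$; the conclusion then splits cleanly into two cases depending on whether this factorization has strictly smaller or equal total degree.

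First I would convert the hypothesis into a concrete relation. By Lemma~\ref{lem: rooted list gives linear quotients}, $(u_1,\ldots,u_{i-1}):(u_i)$ is generated by variables; since the $u_k$'s are distinct squarefree minimal generators, the condition $xu_i \in (u_1,\ldots,u_{i-1})$ forces the existence of some $\ell<i$ with $u_\ell : u_i = x$, so that $u_\ell = xd$ with $d \mid u_i$ and $x \nmid u_i$. Combined with $x \mid u_j$ this immediately gives $u_\ell \mid u_iu_j$, and the monomial $M := u_iu_j/u_\ell$ is well-defined.

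The crux of the argument is to produce $u_m \in G(J(P_n))$ with $u_m \mid M$. Translating to the minimal vertex covers $U_i,U_j,U_\ell$ associated to $u_i,u_j,u_\ell$, the support of $M$ equals $(U_i\cup U_j)\setminus\bigl(\{x\}\cup(U_\ell\setminus U_j)\bigr)$, and one needs this set to contain a vertex cover of $P_n$. A case analysis on an edge $\{p,q\}$ of $P_n$ (using $x\in U_\ell\cap U_j$, $x\notin U_i$, and $U_\ell\setminus\{x\}\subseteq U_i$) shows that the removed set $R:=\{x\}\cup(U_\ell\setminus U_j)$ contains an edge only in a restricted configuration, namely when a neighbor of $x$ lies in $U_\ell\cap U_i\setminus U_j$; whenever $R$ is edge-free, the support of $M$ is itself a vertex cover, yielding the desired $u_m$. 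The main obstacle, and the step I expect to require real work, is handling the degenerate configuration where $R$ does contain an edge; I would attack it by induction on $n$ via the recursive decomposition $\MR(P_n) = x_{n-1}\MR(P_{n-2}),\; x_nx_{n-2}\MR(P_{n-3})$ of Definition~\ref{def:rooted list}, reducing each problematic case to the analogous statement on a shorter path.

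Once $u_m$ has been produced, the rest is routine. The product $u_\ell u_m$ lies in $J(P_n)^2$ and divides $u_iu_j$. If $\deg(u_\ell u_m) < \deg(u_iu_j)$, then $u_\ell u_m$ is a proper divisor of $u_iu_j$ inside $J(P_n)^2$, so $u_iu_j \notin G(J(P_n)^2)$. Otherwise $u_\ell u_m = u_iu_j$, and since $\ell<i<j$ the exponent vector of the factorization $u_\ell u_m$ is positive at coordinate $\ell$ while that of $u_iu_j$ vanishes there, so $u_\ell u_m$ is lex-larger and $u_iu_j$ is not the maximal $2$-fold expression.
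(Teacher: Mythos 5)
First, note that the paper does not prove this statement itself: it is quoted from \cite[Proposition~4.2]{second power}, so there is no internal proof to compare against, and your argument must stand on its own. Its opening and closing steps are fine: extracting $u_\ell$ with $\ell<i$ and $u_\ell:u_i=x$ from the linear-quotients property, the divisibility $u_\ell\mid u_iu_j$, and the final dichotomy (a strict $2$-fold divisor versus an equal but lex-larger expression $u_\ell u_m$) are all correct. The problem is the crux you yourself flag: the existence of $u_m\in G(J(P_n))$ dividing $M=u_iu_j/u_\ell$ is false in general, and the ``degenerate configuration'' is not a corner case that an unspecified induction will absorb. Concretely, take $n=7$, so that $\MR(P_7)$ is $u_1=x_2x_4x_6$, $u_2=x_1x_3x_4x_6$, $u_3=x_1x_3x_5x_6$, $u_4=x_2x_3x_5x_6$, $u_5=x_1x_3x_5x_7$, $u_6=x_2x_3x_5x_7$, $u_7=x_2x_4x_5x_7$. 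Here $(u_1,u_2):(u_3)=(x_4)$ and $x_4\mid u_7$, so the hypothesis holds with $i=3$, $j=7$; the only admissible choice is $\ell=2$ with $x=x_4$, and then $M=u_3u_7/u_2=x_2x_5^2x_7$, whose support fails to cover the edge $\{x_3,x_4\}$, so no minimal generator of $J(P_7)$ divides $M$. The proposition is nevertheless true for this pair because $u_1u_5=x_1x_2x_3x_4x_5x_6x_7$ strictly divides $u_3u_7$ --- but that witness does not have $u_2$ as a factor, so it is unreachable by your construction.

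The proposed repair --- induction on $n$ through $\MR(P_n)=x_{n-1}\MR(P_{n-2}),\,x_nx_{n-2}\MR(P_{n-3})$ --- is not carried out, and the example shows why it is not routine: the problematic configurations are precisely those where $u_i$ and $u_j$ lie in different branches of the decomposition (here $u_3\in x_6\MR(P_5)$ and $u_7\in x_7x_5\MR(P_4)$), so the claim does not reduce to the analogous statement on a single shorter path; one needs an argument comparing generators across the two branches, which is essentially the role played by \cite[Lemma~4.1]{second power} and by the cross-branch cases in the proof of Theorem~\ref{thm:reduction}. As written, the proof has a genuine gap at its central step.
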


We can now prove the main result of this section.

\begin{Theorem}\label{thm:main thm}
	Let $\MR(J(P_n)^s)=Y_1, \ldots, Y_p$. Then $J(P_n)^s$ has linear quotients with respect to $Y_1, \ldots , Y_p$.
\end{Theorem}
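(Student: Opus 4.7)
The plan is to prove Theorem~\ref{thm:main thm} by induction on $n$. For the base cases $n\leq 4$, Lemma~\ref{lem: s-fold products of paths at most 4 vertices} gives $G(J(P_n)^s)=F(J(P_n)^s)$ with unique $s$-fold expressions, so linear quotients with respect to $>_\MR$ can be checked by direct computation of the colon ideals. I would then assume $n\geq 5$ and that the theorem holds for all paths shorter than $P_n$.

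Fix $r\geq 2$ and write the maximal expressions $Y_r=u_{i_1}\cdots u_{i_s}$ with $i_1\leq\cdots\leq i_s$ and, for each $\ell<r$, $Y_\ell=u_{j_1}\cdots u_{j_s}$ with $j_1\leq\cdots\leq j_s$. I must produce a variable $x_z$ dividing $Y_\ell:Y_r$ and lying in $(Y_1,\ldots,Y_{r-1}):(Y_r)$. The central tool will be Proposition~\ref{prop:smart claim generalized}, which supplies a ready family of candidate witnesses already in the colon: by part (1), each variable generator of $(u_1,\ldots,u_{i_t-1}):(u_{i_t})$ sits inside $(Y_1,\ldots,Y_{r-1}):(Y_r)$ whenever $i_t\geq 2$ (these colons are variable-generated by Lemma~\ref{lem: rooted list gives linear quotients}), and by part (2), $x_{n-1}\in (Y_1,\ldots,Y_{r-1}):(Y_r)$ whenever $x_n\mid Y_r$. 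The task thus reduces to showing that, for each $\ell<r$, at least one of these candidates divides $Y_\ell:Y_r$.

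The case analysis will be driven by the recursive decomposition $\MR(P_n)=x_{n-1}\MR(P_{n-2}),\,x_n x_{n-2}\MR(P_{n-3})$. When $Y_r$ and $Y_\ell$ are both pure from the same block, Lemma~\ref{lem:product of all As} or Lemma~\ref{lem:product of all Bs} transports the colon calculation to $J(P_{n-2})^s$ or $J(P_{n-3})^s$, where the inductive hypothesis applies directly. When $Y_r$ is pure from $x_n x_{n-2}\MR(P_{n-3})$ while $Y_\ell$ has a factor from $x_{n-1}\MR(P_{n-2})$, the variable $x_{n-1}$ divides $Y_\ell:Y_r$ (because $x_{n-1}\nmid Y_r$) and Proposition~\ref{prop:smart claim generalized}(2) places it in the colon. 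For the remaining mixed configurations I would decompose $Y_r$ and, where applicable, $Y_\ell$ via Lemma~\ref{lem:mixed product maximal expression and minimal generator}, locate the first index $t$ at which the sorted maximal expressions differ (so $j_t<i_t$), and harvest a variable witness $x_z$ from $(u_1,\ldots,u_{i_t-1}):(u_{i_t})$ using linear quotients of the shorter path containing $u_{i_t}$.

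The principal obstacle is verifying that the chosen $x_z$ actually divides $Y_\ell:Y_r$. Because the first $t-1$ factors of the two sorted expressions agree, the exponent difference collapses to $\deg_{x_z}(Y_\ell)-\deg_{x_z}(Y_r)=\sum_{p\geq t}\bigl(\deg_{x_z}(u_{j_p})-\deg_{x_z}(u_{i_p})\bigr)$; the $p=t$ term contributes $+1$ by the choice of $x_z$, but a later term with $x_z\mid u_{i_p}$ and $x_z\nmid u_{j_p}$ could cancel it. To close this gap, the plan is to iterate with the help of Lemma~\ref{lem:crucial} and Proposition~\ref{prop:condition for non-minimal gen or non-maximal expression}: whenever a naive witness fails, the auxiliary $s$-fold product $Y^*:=Y_r\,u_{j_t}/u_{i_t}$ lies strictly above $Y_r$ in rooted order, so if $Y^*\notin G(J(P_n)^s)$, Lemma~\ref{lem:crucial} yields $Y^{**}\in G(J(P_n)^s)$ with $Y^{**}\mid Y^*$ and $Y^{**}>_\MR Y_r$, providing fresh candidate witnesses; simultaneously, Proposition~\ref{prop:condition for non-minimal gen or non-maximal expression} constrains which later factors of $Y_r$ can contain the bad variable $x_z$, which restricts the configurations to be analysed until a witness surviving the exponent inequality is found.
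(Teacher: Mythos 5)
Your plan is sound and, at the decisive step, coincides with the paper's own argument: the paper also proceeds by induction (on $n+s$ rather than on $n$ alone), also uses Proposition~\ref{prop:smart claim generalized} to certify that the candidate variables already lie in $(Y_1,\dots,Y_{r-1}):(Y_r)$, and in its hardest case (a mixed $Y_r$ whose first-block factors disagree with those of $Y_\ell$) it harvests the witness from $(u_1,\dots,u_{i_t-1}):(u_{i_t})$ at the first index $t$ where the sorted maximal expressions differ --- exactly your scheme. The organizational difference is that the paper splits according to whether $x_n^s$, $x_{n-1}^s$ or $x_nx_{n-1}$ divides $Y_r$ and, in the two pure cases, proves the exact identities $(x_{n-1})+(V_1,\dots,V_{t-1}):(V_t)=(Y_1,\dots,Y_{r-1}):(Y_r)$ and $(U_1,\dots,U_{t-1}):(U_t)=(Y_1,\dots,Y_{r-1}):(Y_r)$ via Lemmas~\ref{lem:product of all As} and~\ref{lem:product of all Bs}; this forces it to convert a mixed $Y_\ell$ into a pure product using Lemma~\ref{lem:every b contains some a} before it can compare with a pure $Y_r$. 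Your ``first differing index'' treatment, applied uniformly, avoids that detour because you only aim at the witness-variable criterion for linear quotients rather than at the colon ideal itself; both routes are legitimate, and yours is arguably more economical.

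Two points need tightening. First, the crux --- that the harvested $x_z$ really divides $Y_\ell:Y_r$ --- should not be left as ``iterate until a witness surviving the exponent inequality is found.'' No iteration is needed: since $Y_r\in G(J(P_n)^s)$ and its expression is maximal, Lemma~\ref{lem:trivial side of reduction characterization thm} makes every pair $u_{i_t}u_{i_p}$ with $p>t$ a minimal generator of $J(P_n)^2$ in its maximal $2$-fold expression, so Proposition~\ref{prop:condition for non-minimal gen or non-maximal expression} forbids $x_z$ from dividing any later factor $u_{i_p}$; hence every term with $p>t$ in your sum is $\geq 0$ and the very first witness succeeds. Note also that Lemma~\ref{lem:crucial} enters only to secure membership of $x_z$ in $(Y_1,\dots,Y_{r-1}):(Y_r)$ --- and that is already packaged inside Proposition~\ref{prop:smart claim generalized}(1) --- it cannot repair a candidate that fails to divide $Y_\ell:Y_r$, so the ``whenever a naive witness fails'' loop as described would not close the gap on its own. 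Second, your basis step is under-justified: for $n=4$ and arbitrary $s$ this is not a finite check; either argue it in the style of Example~\ref{ex:diamond}, or simply take $n\leq 3$ as the base, since the recursive description of $\MR(P_4)$ already refers back to $\MR(P_2)$ and $\MR(P_1)$ and is therefore covered by the inductive step.
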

\begin{proof} 
	We will proceed by induction on $n+s$. We will show that  $(Y_1,\dots ,Y_{r-1}):(Y_r)$ is generated by variables, for all $r\geq 2$.
	
	\textbf{Basis step ($\mathbf{n\leq 3}$ or $\mathbf{s=1}$):}
	The case when $s=1$ is Lemma~\ref{lem: rooted list gives linear quotients}. If $n=2$ or $n=3$ with $\MR(P_n)=u_1, u_2$, then by Lemma~\ref{lem: s-fold products of paths at most 4 vertices} we have $\MR(J(P_n)^s)=u_1^s,u_1^{s-1}u_2,\dots ,u_2^s$ and it is straightforward to show that
	$(Y_1,\dots ,Y_{r-1}):(Y_r)=(u_1):(u_2)=(x_{n-1})$ holds for every $r\geq 2$.
	
	\textbf{Induction step:} Let us assume that $n\geq 4$ and $s\geq 2$. We set some notation for the following rooted lists.
	
	 \begin{itemize}
	 	\item  $\MR(P_{n-2})=u_1, \dots , u_a$
	 	\item $\MR(P_{n-3})=v_1, \dots , v_b$
	 	\item $\MR(J(P_{n-2})^s)=U_1, \dots , U_A$
	 	\item $\MR(J(P_{n-3})^s)=V_1, \dots ,V_B$.
	 \end{itemize}
 
	
\emph{Case 1:} Suppose that $x_n^s$ divides $Y_r$. Assume that $Y_r$ has the maximal expression 
$$Y_r=(x_n x_{n-2}v_{i_1} )(x_n x_{n-2} v_{i_2})\dots (x_n x_{n-2} v_{i_s})$$
for some $i_1\leq \dots \leq i_s$. 
From Proposition~\ref{prop:smart claim generalized} (2) we know that $x_{n-1}$ is a generator of $(Y_1,\dots ,Y_{r-1}):(Y_r)$. From Lemma~\ref{lem:product of all Bs} we can set $V_t=v_{i_1}v_{i_2}\dots v_{i_s}$ for some $t\in\{1,\dots, B\}$. If $t=1$, then by definition of rooted order we have
\[(Y_1,\dots ,Y_{r-1}):(Y_r) =(x_{n-1})\]
and nothing is left to show. Therefore let us assume that $t>1$. Observe that because of induction assumption on $P_{n-3}$ it suffices to show the equality 
\[
 (x_{n-1})+(V_1,V_2,\dots, V_{t-1}):(V_t)=(Y_1,\dots ,Y_{r-1}):(Y_r).
\]
Because of Lemma~\ref{lem:product of all Bs} we already have the inclusion
\[
(x_{n-1})+(V_1,V_2,\dots, V_{t-1}):(V_t) \subseteq (Y_1,\dots ,Y_{r-1}):(Y_r).
\]
We now prove the reverse containment. For any $\ell \leq r-1$, if $x_{n-1} | Y_{\ell}$, then it is clear that $Y_{\ell}:Y_r \in (x_{n-1})$. Otherwise, $(x_{n-2}x_n)^s | Y_{\ell}$ and by Lemma~\ref{lem:product of all Bs}, we have $Y_{\ell}/(x_{n-2}x_n)^s=V_k$ for some $k$. Moreover, since $Y_{\ell} >_{\MR} Y_{r}$ Lemma~\ref{lem:product of all Bs} implies that
$V_k >_{\MR} V_t$. Hence $Y_{\ell}:Y_r=V_k:V_t$, proving the reverse containment.

\medskip


\emph{Case 2:} Suppose that $x_{n-1}^s$ divides $Y_r$. Let 
$$Y_r=(x_{n-1}u_{i_1})(x_{n-1}u_{i_2})\dots (x_{n-1}u_{i_s})$$ be the maximal expression for some $i_1\leq \dots \leq i_s$. Then the expression $u_{i_1}\dots u_{i_s}$ is also maximal by Remark~\ref{rk:max expression preserved P_{n-2}}. By Lemma~\ref{lem:product of all As} we can set $U_t=u_{i_1}\dots u_{i_s}$ for some $t>1$ as $r>1$. By induction assumption on $P_{n-2}$ it suffices to show that 
\[
(U_1,\dots ,U_{t-1}):(U_t)=(Y_1,\dots ,Y_{r-1}):(Y_r).
\]

By Lemma~\ref{lem:product of all As} the inclusion $\subseteq$ is clear. To see the reverse, let $\ell\leq r-1$. By definition of rooted list, $Y_{\ell}$ is divisible by either $x_{n-1}^s$ or $x_{n-1}x_n$. Because of Lemma~\ref{lem:product of all As} we may assume that $Y_{\ell}$ is divisible by $x_{n-1}x_n$. Let $Y_{\ell}$ have the maximal expression
\[Y_{\ell}=(x_{n-1}u_{j_1})\dots (x_{n-1}u_{j_c})(x_nx_{n-2}v_{k_{1}})\dots (x_nx_{n-2}v_{k_d})\]
for some $1\leq j_1\leq \dots \leq j_c\leq a$ and $1\leq k_1\leq \dots \leq k_d\leq b$.

By Lemma~\ref{lem:every b contains some a} we can form the $s$-fold product
$$P=(x_{n-1}u_{j_1})\dots (x_{n-1}u_{j_c})(x_{n-1}u_{k_1'})\dots (x_{n-1}u_{k_d'}) $$
where $u_{k_i'}$ divides $x_{n-2}v_{k_i}$ for each $i=1,\dots ,d$. By definition of $>_\MR$ we now have
\[P>_\MR Y_\ell >_\MR Y_r \, \text{ in } F(J(P_n)^s).\]
Lemma~\ref{lem:product of all As} implies that
\[u_{j_1}\dots u_{j_c}u_{k_1'}\dots u_{k_d'} \, >_\MR \,  u_{i_1}\dots u_{i_s} \, \text{ in } F(J(P_{n-2})^s).\]
Observe that 
	\begin{equation*}
\begin{split}
P\in G(J(P_n)^s)& \Longrightarrow u_{j_1}\dots u_{j_c}u_{k_1'}\dots u_{k_d'}=U_{t'} \, \text{ for some } t'<t \text{ by Lemma~\ref{lem:product of all As}}  \\
 & \Longrightarrow P:Y_r \in (U_1,\dots ,U_{t-1}):(U_t) \text{ as } P:Y_r=U_{t'}:U_t\\
 & \Longrightarrow Y_\ell: Y_r\in (U_1,\dots ,U_{t-1}):(U_t) \text{ as } P:Y_r \text{ divides }Y_\ell:Y_r
\end{split}
\end{equation*} 
as desired. On the other hand, if $P \notin G(J(P_n)^s)$, then by Lemma~\ref{lem:crucial}, there exists  $Y_\alpha \in G(J(P_n)^s)$ such that $Y_\alpha|P$ and $Y_\alpha>_{\MR} P$. Since $Y_\alpha|P$ it follows from Remark~\ref{rk:divisor of the same type} that $x_{n-1}^s|Y_{\alpha}$. Since $Y_\alpha>Y_r$ by Lemma~\ref{lem:product of all As} we get $Y_\alpha:Y_r\in(U_1,\dots ,U_{t-1}):(U_t)$. Since $Y_\alpha:Y_r$ divides $P:Y_r$ and $P:Y_r$ divides $Y_{\ell}:Y_r$, we have $Y_\alpha:Y_r$ divides $Y_{\ell}:Y_r$ and $Y_{\ell}:Y_r\in (U_1,\dots ,U_{t-1}):(U_t)$ as desired.

\medskip


\emph{Case 3:} Suppose that $Y_r$ is divisible by $x_n x_{n-1}$ and it has the maximal expression
 $$Y_r=(x_{n-1}u_{i_1})\dots (x_{n-1}u_{i_q})(x_nx_{n-2}v_{j_1})\dots (x_nx_{n-2}v_{j_k})$$
 for some $1\leq i_1\leq \dots \leq i_q\leq a$ and $1\leq j_1\leq \dots \leq j_k\leq b$.
First note that from Proposition~\ref{prop:smart claim generalized} we have
\begin{equation}\label{eq: case3 main theorem}
x_{n-1}\in (Y_1,\dots ,Y_{r-1}):(Y_r).
\end{equation}
 Let $t<r$. Since $Y_t>_\MR Y_r$ and because of \eqref{eq: case3 main theorem} we may assume that $Y_t$ has the maximal expression
$$Y_t=(x_{n-1}u_{\alpha_1})\dots (x_{n-1}u_{\alpha_{q'}})(x_nx_{n-2}v_{\beta_1})\dots (x_nx_{n-2}v_{\beta_{k'}}) $$
for some $1\leq \alpha_1 \leq \dots \leq \alpha_{q'}\leq a$ and $1\leq \beta_1\leq \dots \leq \beta_{k'}\leq b$ with $q'\leq q$. We will now consider the following cases.

\emph{Case 3.1:} Suppose that $i_\ell = \alpha_\ell$ for all $\ell=1,\dots ,q'$. Then $q'=q$ since $Y_t>_\MR Y_r$. This implies $k=k'$. By Lemma~\ref{lem:mixed product maximal expression and minimal generator} we get $v_{\beta_1}\dots v_{\beta_k}>_\MR v_{j_1}\dots v_{j_k}$ in $\MR(J(P_{n-3})^k)$. Observe that 
\[Y_t:Y_r = v_{\beta_1}\dots v_{\beta_k} : v_{j_1}\dots v_{j_k}.\]
By the induction assumption on $J(P_{n-3})^k$ there exists a variable $x_z$ such that
\[x_z\text{ divides }v_{\beta_1}\dots v_{\beta_k} : v_{j_1}\dots v_{j_k} \text{ and } x_z=v_{\gamma_1}\dots v_{\gamma_k}:v_{j_1}\dots v_{j_k}\]
for some $v_{\gamma_1}\dots v_{\gamma_k}>_\MR v_{j_1}\dots v_{j_k}$ in $\MR(J(P_{n-3})^k)$. Therefore it suffices to show that
\[x_z\in (Y_1,\dots ,Y_{r-1}):(Y_r).\]
Consider the $s$-fold product $P=(x_{n-1}u_{i_1})\dots (x_{n-1}u_{i_q})(x_nx_{n-2}v_{\gamma_1})\ldots(x_nx_{n-2}v_{\gamma_k}).$ By definition of rooted order $P>_\MR Y_r$. Clearly $P:Y_r=x_z$. If $P\in G(J(P_n)^s)$ nothing is left to show. Otherwise, the result follows from Lemma~\ref{lem:crucial}.

\emph{Case 3.2:} Suppose that there is a smallest index $\ell$ among $1,\dots ,q'$ such that $i_\ell \neq \alpha_\ell$. Since $Y_t>_\MR Y_r$ we have $i_\ell > \alpha_\ell$. Then according to Lemma~\ref{lem: rooted list gives linear quotients} there exists a variable in $(u_1,\dots ,u_{i_\ell -1}):(u_{i_\ell})$, say $x_z$, which divides $u_{\alpha_\ell}:u_{i_\ell}$. Note that $x_z\neq x_{n-2}$ because of recursive definition of $\MR(P_{n-2})$. Also, it is clear that $x_z\neq x_n, x_{n-1}$ because $x_z$ is a vertex of $P_{n-2}$. From Proposition~\ref{prop:smart claim generalized} we see that
\[x_z\in (x_{n-1}u_1,\dots ,x_{n-1}u_{i_\ell -1}):(x_{n-1}u_{i_\ell})\subseteq (Y_1,\dots ,Y_{r-1}):(Y_r) \]
and thus it suffices to show that $x_z$ divides $Y_t:Y_r$. From Lemma~\ref{lem:trivial side of reduction characterization thm} and Proposition~\ref{prop:condition for non-minimal gen or non-maximal expression} we see that 
\[x_z\nmid (x_{n-1}u_{i_\ell})(x_{n-1}u_{i_{\ell+1}})\dots (x_{n-1}u_{i_q})(x_nx_{n-2}v_{j_1})\dots (x_nx_{n-2}v_{j_k}).\]
By the choice of $\ell$ since $u_{i_1}\dots u_{i_{\ell-1}}=u_{\alpha_1}\dots u_{\alpha_{\ell-1}}$ the result follows.
\end{proof}

Using Theorem~\ref{thm:main thm} one can obtain an exact formula for the regularity of powers of $J(P_n)$ as in the next corollary.

\begin{Corollary} For any $n\geq 2$ and $s\geq 1$
	\begin{equation*}
	\reg(J(P_n)^s)=
	\begin{cases*}
	2ks & if $n=3k+1$ or $n=3k$ \\
	2ks+s        & if $n=3k+2$.
	\end{cases*}
	\end{equation*}
\end{Corollary}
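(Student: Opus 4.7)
The plan is to combine Theorem~\ref{thm:main thm} with the standard fact that a monomial ideal with linear quotients is componentwise linear, whence its regularity equals the largest degree appearing among its minimal generators. Consequently the computation reduces to evaluating $\max\{\deg U : U \in G(J(P_n)^s)\}$, and I set $\alpha_n := \max\{\deg u : u \in G(J(P_n))\}$.

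For the upper bound, any $U \in G(J(P_n)^s)$ arises as $U = u_{i_1}\cdots u_{i_s}$ with $u_{i_j} \in G(J(P_n))$, so $\deg U \le s\alpha_n$. For the matching lower bound, I would pick any $u \in G(J(P_n))$ attaining $\deg u = \alpha_n$ and argue that $u^s$ itself is a minimal generator of $J(P_n)^s$. By Corollary~\ref{cor: characterization}, this reduces to the claim $u^2 \in G(J(P_n)^2)$. Suppose otherwise: then there exist $v,w \in G(J(P_n))$ such that $vw$ strictly divides $u^2$. Since $v,w$ are squarefree, every variable occurring in $v$ or $w$ must already occur in $u$, and it follows that $v$ and $w$ divide $u$ as monomials. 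But then their supports are vertex covers of $P_n$ contained in the minimal vertex cover corresponding to $u$, so minimality forces $v=w=u$, contradicting strict divisibility.

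It then remains to compute $\alpha_n$ explicitly. The recursive decomposition in Definition~\ref{def:rooted list} immediately yields the recurrence $\alpha_n = \max(1 + \alpha_{n-2},\, 2 + \alpha_{n-3})$ for $n \ge 5$, together with base values $\alpha_2 = 1$, $\alpha_3 = 2$, $\alpha_4 = 2$. A short induction on $n$, branching on $n \bmod 3$, confirms that $\alpha_n = 2k$ when $n \in \{3k,\, 3k+1\}$ and $\alpha_n = 2k+1$ when $n = 3k+2$; multiplying by $s$ yields the stated two-case formula. I do not anticipate a real obstacle; the only delicate step is the squarefree antichain argument showing $u^s$ is a minimal generator whenever $\deg u = \alpha_n$.
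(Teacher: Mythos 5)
Your proposal is correct and follows essentially the same route as the paper, which simply defers to the analogous argument for $s=2$ in \cite{second power}: linear quotients implies componentwise linear, so the regularity equals the top generating degree, which one computes from the recursion for $\MR(P_n)$. Your antichain argument that $u^2\in G(J(P_n)^2)$ for a top-degree generator $u$, combined with Corollary~\ref{cor: characterization}, correctly supplies the lower bound.
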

\begin{proof}
	Similar to proof of \cite[Corollary~5.3]{second power}.
\end{proof}

\section{Rooted order for chordal graphs}\label{sec: rooted order on chordal graphs}
In this section we will see how to generalize the concept of rooted list to chordal graphs. To simplify the notation we will use a set $A$ of vertices of $G$ interchangeably with the squarefree monomial $\prod_{x_i\in A}x_i$. 
\begin{Notation}
	For each $i=1,\dots ,r$ let $L_i$ be the list $L_i=a_1^i,\dots ,a_{k_i}^i$. Then by $L= L_1,L_2,\dots ,L_r$ we denote a new list $L$ which is obtained by joining the lists in the given order. More precisely,
	\[L=a_1^1,\dots ,a_{k_1}^1, a_1^2,\dots ,a_{k_2}^2,\cdots ,a_1^r,\dots ,a_{k_r}^r.\]
\end{Notation}

\begin{Definition}[\textbf{Rooted list for chordal graphs}]\label{def: rooted list for chordal}
	Suppose that $G$ is a chordal graph with a simplicial vertex $x_1$ such that $N[x_1]=\{x_1,\dots,x_m\}$ for some $m\geq 2$. We say $\MR(G)$ is a \emph{rooted list} of $G$ if it can be written in the form
		$$ \MR(H_1)N(x_1),\MR(H_2)N(x_2),\dots ,\MR(H_m)N(x_m)$$
	 where the list $\MR(H_i)$ is a rooted list of the subgraph $H_i=G\setminus N[x_i]$ for each $i=1,\dots, m$. If $G$ has no edges, then we set $\MR(G)=1$.
\end{Definition}

\begin{Remark}
	Observe that one can construct rooted lists in different ways as they depend on the choice of simplicial vertex. In Definition~\ref{def:rooted list} we always picked the last vertex $x_n$ of $P_n$ as a simplicial vertex.
\end{Remark}
\begin{Lemma}\label{lem:rooted list for chordal gives linear quotients}
		Let $G$ be a chordal graph with a rooted list $\MR(G)=u_1,\dots ,u_q$. Then
	\begin{enumerate}
		\item $G(J(G))=\{u_1,\dots ,u_q\}$
		\item $J(G)$ has linear quotients with respect to $u_1,\dots ,u_q$.
	\end{enumerate}
\end{Lemma}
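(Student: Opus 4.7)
I will prove (1) and (2) simultaneously by induction on $|V(G)|$. The base case of a graph with no edges is immediate since $\MR(G) = 1$ and $J(G) = (1)$. For the inductive step, fix the simplicial vertex $x_1$ with $N[x_1] = \{x_1, \dots, x_m\}$ used in Definition~\ref{def: rooted list for chordal}, and assume the lemma for each chordal subgraph $H_i = G \setminus N[x_i]$, which has strictly fewer vertices.

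For part (1), since $N[x_1]$ is a clique, any vertex cover must contain at least $m-1$ of $x_1, \dots, x_m$; containing all $m$ makes $x_1$ redundant because $N(x_1) \subseteq \{x_2, \dots, x_m\}$. Hence every minimal vertex cover $C$ of $G$ excludes exactly one $x_i \in N[x_1]$, so $N(x_i) \subseteq C$ and $C \cap V(H_i)$ is easily checked to be a minimal vertex cover of $H_i$ (redundancy of any $y \in V(H_i)$ in $H_i$ lifts to $G$ because $y$ has no neighbour in $N[x_i]$). The induction hypothesis identifies such covers with $\MR(H_i)$, matching the recursive definition of $\MR(G)$.

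For part (2), let $\MR(G) = Y_1, \dots, Y_q$, fix $r \geq 2$ and write $Y_r = N(x_j) w'$ with $w' \in \MR(H_j)$, so $Y_r$ lies in block $j$. Given $Y_k$ with $k < r$, I must exhibit a variable dividing $Y_k:Y_r$ that lies in $(Y_1,\dots,Y_{r-1}):(Y_r)$. If $Y_k = N(x_j)w$ is in the same block $j$, then $Y_k:Y_r = w:w'$ and the induction hypothesis on $H_j$ supplies an earlier $\tilde w \in \MR(H_j)$ with $\tilde w : w'$ equal to a variable $x_z$ dividing $w:w'$; the earlier generator $N(x_j)\tilde w$ then witnesses $x_z$ for $Y_r$. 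If $Y_k$ lies in an earlier block $i < j$, then $x_j$ divides $N(x_i) \mid Y_k$ since $x_ix_j$ is an edge of the clique $N[x_1]$, while $x_j$ divides neither $N(x_j)$ nor $w'$ (as $x_j \notin V(H_j)$); thus $x_j \mid Y_k:Y_r$.

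The crux is to place this $x_j$ in the colon ideal by constructing a witness in block $1$. Set $B_j = N(x_j) \setminus N[x_1]$ and identify $w'$ with its support. I first verify that $B_j \cup w'$ covers $H_1$: an edge of $H_1$ with both endpoints in $V(H_j) = V(H_1) \setminus B_j$ is an edge of $H_j$ covered by $w'$, while any remaining edge has an endpoint in $B_j$. Picking a minimal vertex cover $C'' \subseteq B_j \cup w'$ of $H_1$, the generator $Y_{k'} := N(x_1) \cdot C''$ lies in block $1$ and hence before $Y_r$. Using the identities $N(x_j) = (N(x_1) \setminus \{x_j\}) \cup \{x_1\} \cup B_j$ and $w' \cap N(x_1) = \emptyset$, a direct support computation yields $Y_{k'} \setminus Y_r = \{x_j\}$, so $Y_{k'} : Y_r = x_j$. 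The principal obstacle is this Case B witness construction, which requires both the covering argument for $B_j \cup w'$ and the careful support bookkeeping that collapses $Y_{k'}:Y_r$ to the single variable $x_j$.
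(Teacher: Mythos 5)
Your proof is correct, but it takes a genuinely different route from the paper. The paper disposes of this lemma in one line by citing \cite[Theorem~3.1]{erey} and \cite[Theorem~2.13]{vv}: the recursive list $\MR(G)$ is exactly the ordering of minimal vertex covers produced in \cite{erey}, and the linear-quotients property is inherited from the shellability machinery of Van Tuyl--Villarreal. You instead give a self-contained induction on $|V(G)|$ that re-proves both statements from scratch. Your part (1) is the standard ``every minimal vertex cover omits exactly one vertex of the clique $N[x_1]$'' bijection, and your part (2) correctly splits into the same-block case (handled by the induction hypothesis on $H_j$, since $Y_k:Y_r=w:w'$ when the common factor $N(x_j)$ cancels) and the earlier-block case, where the key point is that $x_j$ divides $Y_k:Y_r$ and must be realized as $Y_{k'}:Y_r$ for some earlier generator. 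Your block-$1$ witness $Y_{k'}=N(x_1)\cdot C''$ with $C''\subseteq B_j\cup\operatorname{supp}(w')$ a minimal cover of $H_1$ does work: the inclusion $N[x_1]\subseteq N[x_j]$ (from $N[x_1]$ being a clique) gives both that $B_j\cup\operatorname{supp}(w')$ covers $H_1$ and that the support difference $Y_{k'}\setminus Y_r$ collapses to $\{x_j\}$. What the paper's approach buys is brevity and a conceptual link to shellability; what yours buys is a construction that is explicit and independent of the external references, and whose block-$1$ witness argument is essentially a special case of the mechanism the paper later needs for powers (compare Proposition~\ref{prop:smart claim generalized} and Lemma~\ref{lem: smart claim for chordal}). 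The only presentational caveat is that in part (1) you verify only one direction of the bijection in detail; the converse (that $N(x_i)\cup D$ is a minimal cover of $G$ for every minimal cover $D$ of $H_i$) is needed to conclude $G(J(G))=\{u_1,\dots,u_q\}$ and to know the block-$1$ witness actually occurs in the list, but it is equally routine.
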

\begin{proof}
	Proof follows from \cite[Theorem~3.1]{erey} and \cite[Theorem~2.13]{vv}.
\end{proof}

\begin{Definition}[\textbf{Rooted order/list for powers}]\label{def:rooted for powers of chordal} Let $G$ be a chordal graph with a rooted list $\MR (G)=u_1,\dots ,u_q$. We define a total order $>_{\MR}$ on $F(J(G)^s)$ which we call \emph{rooted order} as follows. For $M,N\in F(J(G)^s)$ with maximal expressions $M=u_1^{a_1}\dots u_q^{a_q}$ and $N=u_1^{b_1}\dots u_q^{b_q}$ we set $M>_{\MR} N$ if $(a_1,\dots ,a_q)>_{lex} (b_1,\dots ,b_q)$. 
	
Let $G(J(G)^s)=\{U_1, \ldots, U_r\}$. Then we say $U_1,\dots ,U_r$ is a \emph{rooted list} of minimal generators of $J(G)^s$ if $U_1>_{\MR} \ldots >_{\MR} U_r$. In such case, we denote the rooted list of generators by $\MR(J(G)^s)=U_1, \ldots, U_r.$ 
\end{Definition}

The following lemma is a version of Proposition~\ref{prop:smart claim generalized}.
\begin{Lemma}\label{lem: smart claim for chordal}
	Let $G$ be a chordal graph with $F(J(G)^s)=G(J(G)^s)$. Let $\MR(G)=u_1,\dots,u_q$ and let $\MR(J(G)^s)=Y_1,\dots,Y_p$. Suppose that $Y_r=u_{j_1}\dots u_{j_s}$ is the maximal expression for some $2\leq r\leq p$ with $j_1\leq \dots \leq j_s$. For each $1\leq t \leq s$ with $2\leq j_t$ we have
		\[(u_1,u_2,\dots ,u_{j_t-1}):(u_{j_t})\subseteq (Y_1,\dots ,Y_{r-1}):(Y_r).\]
\end{Lemma}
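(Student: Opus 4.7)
The plan is to closely follow the proof of Proposition~\ref{prop:smart claim generalized}(1), with the hypothesis $F(J(G)^s) = G(J(G)^s)$ playing the role that Lemma~\ref{lem:crucial} played for paths. First, by Lemma~\ref{lem:rooted list for chordal gives linear quotients}, the ideal $(u_1, \dots, u_{j_t-1}):(u_{j_t})$ is generated by variables, so it suffices to show that every such variable lies in $(Y_1, \dots, Y_{r-1}):(Y_r)$. Accordingly, I would fix $\ell < j_t$ and a variable $x_z$ with $u_\ell : u_{j_t} = x_z$.

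The central object is the $s$-fold product $M := Y_r u_\ell / u_{j_t}$, obtained by replacing one factor $u_{j_t}$ in the maximal expression of $Y_r$ by $u_\ell$. Two facts then need checking. First, a direct squarefree monomial computation gives
\[
\gcd(M, Y_r) = \frac{Y_r \gcd(u_\ell, u_{j_t})}{u_{j_t}}, \quad \text{hence} \quad M : Y_r = \frac{u_\ell}{\gcd(u_\ell, u_{j_t})} = x_z.
\]
Second, the exponent vector of the expression $u_{j_1} \dots u_{j_{t-1}} u_\ell u_{j_{t+1}} \dots u_{j_s}$ of $M$ differs from the maximal exponent vector of $Y_r$ by $+1$ at position $\ell$ and $-1$ at position $j_t$; since $\ell < j_t$, this particular expression beats the maximal expression of $Y_r$ lexicographically, and a fortiori so does the maximal expression of $M$. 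Thus $M >_{\MR} Y_r$.

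Now the hypothesis $F(J(G)^s) = G(J(G)^s)$ finishes the argument: since $M \in F(J(G)^s)$, it is automatically a minimal generator, so $M = Y_k$ for some index $k$, and $M >_{\MR} Y_r$ forces $k < r$. Therefore $x_z = M : Y_r = Y_k : Y_r \in (Y_1, \dots, Y_{r-1}):(Y_r)$, as required.

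I do not anticipate any serious obstacle. The only step requiring mild care is the gcd computation above, but it reduces mechanically to the identity $u_\ell : u_{j_t} = x_z$ together with the divisibility $u_{j_t} \mid Y_r$. Crucially, the assumption $F(J(G)^s) = G(J(G)^s)$ sidesteps the need for a chordal analogue of Lemma~\ref{lem:crucial} — which is not available in this generality — and that is precisely what allows the argument to go through in the broader chordal setting.
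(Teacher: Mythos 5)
Your proof is correct and takes essentially the same route as the paper: both consider the $s$-fold product $M = Y_r u_\ell / u_{j_t}$, verify $M:Y_r = x_z$ and $M >_{\MR} Y_r$, and invoke the hypothesis $F(J(G)^s)=G(J(G)^s)$ to conclude $M$ is among $Y_1,\dots,Y_{r-1}$. Your write-up merely supplies the gcd computation and the lexicographic comparison that the paper leaves implicit.
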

\begin{proof}
	  The ideal $(u_1,u_2,\dots ,u_{j_t-1}):(u_{j_t})$ is generated by variables since the rooted order gives linear quotients by Lemma~\ref{lem:rooted list for chordal gives linear quotients}. Let $\ell <j_t$ with $u_\ell:u_{j_t}=x_z$ for some variable $x_z$. Consider the $s$-fold product $M=Y_ru_\ell /u_{j_t}$. Then $M:Y_r=x_z$ and $M\root Y_r$. By assumption $M$ is a minimal generator of $J(G)^s$ and the proof follows.
\end{proof}

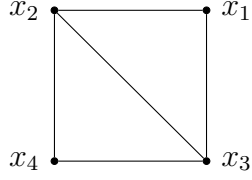
\begin{figure}[hbt!]
	\centering
	\begin{tikzpicture}
	[scale=1.00, vertices/.style={draw, fill=black, circle, minimum size = 4pt, inner sep=0.5pt}, another/.style={draw, fill=black, circle, minimum size = 2.5pt, inner sep=0.1pt}]
	\node[another, label=left:{$x_2$}] (b) at (-1, 1) {};
	\node[another, label=left:{$x_4$}] (c) at (-1, -1) {};
	\node[another, label=right:{$x_3$}] (e) at (1,-1) {};
	\node[another, label=right:{$x_1$}] (d) at (1,1) {};
	\foreach \to/\from in {b/c, b/e, c/e, b/d, d/e}
	\draw [-] (\to)--(\from);
	\end{tikzpicture}
	\caption{\label{diamond} Diamond graph}
\end{figure}

In the next chordal example, we construct a rooted list $\MR(G)$ such that rooted order $>_\MR$ on the generators of $J(G)^s$ yield linear quotients for all $s\geq 1$.
\begin{Example}\label{ex:diamond}
	Let $G$ be the chordal graph in Figure~\ref{diamond}. As in notation of Definition~\ref{def: rooted list for chordal} the vertex $x_1$ is a simplicial vertex and $N(x_1)=\{x_2,x_3\}$. Observe that $H_1$ is the graph consisting of the isolated vertex $x_4$. Also, $H_2$ and $H_3$ are empty graphs. Therefore we take $\MR(H_1)=\MR(H_2)=\MR(H_3)=1$. Then the rooted list of $G$ is $\MR(G)=u_1,u_2,u_3$ where
	\[u_1=x_2x_3, u_2=x_1x_3x_4, u_3=x_1x_2x_4.\]
	It is not hard to see that $$F(J(G)^s)=G(J(G)^s)$$ and every $s$-fold product has a unique expression. Let $\MR(J(G)^s)=Y_1,\dots,Y_p$. Now we will show that $J(G)^s$ has linear quotients with respect to the order $Y_1,\dots,Y_p$. Suppose that $Y_r=u_1^\alpha u_2^\beta u_3^\gamma$ with $r\geq 2$. Consider the ideal $I$ defined by
		\begin{equation*}
	I=
	\begin{cases*}
	(x_2,x_3) & if  $\beta\neq 0$ and $\gamma\neq 0$ \\
	(x_3)       & if $\beta=0$ and $\gamma\neq 0$ \\
	 (x_2) & if $\beta\neq 0$ and $\gamma=0$
	\end{cases*}
	\end{equation*}
	
Since $r\geq 2$, we claim that $I=(Y_1,\dots ,Y_{r-1}):(Y_r)$. It is clear from Lemma~\ref{lem: smart claim for chordal} that $I\subseteq (Y_1,\dots ,Y_{r-1}):(Y_r)$ because $(u_1):(u_2)=(x_2)$ and $(u_1,u_2):(u_3)=(x_3)$. To see the reverse, assume for a contradiction there exists $\ell <r$ such that no variable in $I$ divides $Y_{\ell}:Y_r$. Let $Y_\ell=u_1^{\alpha '} u_2^{\beta '} u_3^{\gamma '} $.

	\emph{Case 1:} Suppose $\beta\neq 0$ and $\gamma\neq 0$. Comparing exponents of $x_2$ and $x_3$ in $Y_\ell$ and $Y_r$ we see that $\alpha '+\gamma '\leq \alpha+\gamma$ and $\alpha '+\beta ' \leq \alpha+\beta $. Since both $Y_\ell$ and $Y_r$ are $s$-fold products we have $\alpha+\beta+\gamma = \alpha '+\beta '+\gamma '$ and thus $\alpha '\leq \alpha$. Since $Y_\ell >_\MR Y_r$ by definition of rooted order we get $\alpha '=\alpha$. This implies $\beta=\beta '$ and $\gamma=\gamma '$ and $\ell=r$, contradiction.

	\emph{Case 2:} Suppose $\beta=0$ and $\gamma\neq 0$. Comparing exponents of $x_3$ in $Y_\ell$ and $Y_r$ we see that $\alpha '+\beta '\leq \alpha$. In particular $\alpha ' \leq \alpha$. By definition of rooted order $\alpha'=\alpha$ must hold. This implies $\beta '=0$ and $\gamma '=\gamma$. Therefore $\ell=r$, contradiction.
	
	\emph{Case 3:} Suppose $\beta\neq 0$ and $\gamma=0$. Comparing exponents of $x_2$ in $Y_\ell$ and $Y_r$ we see that $\alpha '+\gamma '\leq \alpha$. In particular $\alpha ' \leq \alpha$. By definition of rooted order $\alpha'=\alpha$ must hold. This implies $\gamma '=0$ and $\beta '=\beta$. Therefore $\ell=r$, contradiction.
\end{Example}

We do not know any example of a power of a chordal graph which does not give linear quotients with respect to a rooted order. Therefore this led us to the following question.

\begin{question}
	Given a chordal graph $G$, does there exist a rooted list $\MR(G)$ such that the rooted order $>_\MR$ on the minimal generating set of $J(G)^s$ yields linear quotients for every $s\geq 1$?
\end{question}

\section*{Acknowledgment}
The author's research was partially supported by T\"{U}B\.{I}TAK, grant no. 118C033. We would like to thank the anonymous referee for his/her helpful comments. After the
submission of this paper, taking an entirely different approach, Herzog, Hibi and Moradi \cite{hhm} independently proved that all powers of the vertex cover ideal of a path graph have linear quotients.

\end{document}